\definecolor{mygray}{rgb}{0.9,0.9,0.9}
\def\bkR{{\rm I\kern-.17em R}}
\def\R{\bkR}
\def\Z{\mathbb{Z}}
\def\Teich{\mathcal{T}}
\DeclareMathOperator{\SL}{SL}
\DeclareMathOperator{\GL}{GL}
\DeclareMathOperator{\PGL}{PGL}
\DeclareMathOperator{\hol}{hol}
\DeclareMathOperator{\dev}{dev}
\newcommand{\RP}{\R P^2}
	\theoremstyle{plain}
	\newtheorem{theorem}{Theorem}
	\newtheorem{lemma}[theorem]{Lemma}
	\newtheorem{proposition}[theorem]{Proposition}
\begin{document}

\title{Tessellating the moduli space of strictly convex\\ projective structures on the once-punctured torus}

\author{Robert C. Haraway, III and Stephan Tillmann}

\begin{abstract}
We show that associating the Euclidean cell decomposition due to Cooper and Long to each point of the moduli space of framed strictly convex real projective structures of finite volume on the once-punctured torus gives this moduli space a natural cell decomposition. The proof makes use of coordinates due to Fock and Goncharov, the action of the mapping class group as well as algorithmic real algebraic geometry. We also show that the decorated moduli space of framed strictly convex real projective structures of finite volume on the thrice-punctured sphere has a natural cell decomposition.
 \end{abstract}

\primaryclass{57M50, 57N05, 14H15}

\keywords{Projective surface, cell decomposition, moduli space, convex hull}

\maketitle


\section{Introduction}

Classical Teichm\"uller space can be viewed as 
the moduli space of marked hyperbolic structures 
of finite volume on a surface. In the case of a 
punctured surface, many geometrically meaningful 
\emph{ideal cell decompositions} for its 
Teichm\"uller space are known. For instance, 
quadratic differentials are used for the 
construction attributed to Harer, Mumford and 
Thurston~\cite{Harer1986}; hyperbolic geometry 
and geodesic laminations are used by Bowditch 
and Epstein~\cite{BE1988}; and 
Penner~\cite{Penner1987} uses Euclidean cell 
decompositions associated to the points in 
\emph{decorated} Teichm\"uller space. The 
decoration arises from associating a positive 
real number to each cusp of the surface. All of 
these decompositions are natural in the sense 
that they are invariant under the action of the 
mapping class group (and hence descend to a 
cell decomposition of the moduli space of 
unmarked structures) and that they do not 
involve any arbitrary choices.

A hyperbolic structure is an example of a 
\emph{strictly convex projective structure}, 
and two hyperbolic structures are equivalent as 
hyperbolic structures if and only if they are 
equivalent as projective structures. Let 
$S_{g,n}$ denote the surface  of genus $g$ with 
$n$ punctures. We will always assume that $2g+n>2,$ 
so that the surface has negative Euler 
characteristic. Whereas the classical Teichm\"uller 
space $\Teich(S_{g,n})$ is homeomorphic with 
$\R^{6g-6+2n},$ Marquis~\cite{Marq1} has shown that 
the analogous moduli space $\Teich_+(S_{g,n})$ 
of marked strictly convex projective structures of finite 
volume on $S_{g,n}$ is homeomorphic with 
$\R^{16g-16+6n}.$ 

Recently, Cooper and Long~\cite{CL} generalised the 
key construction of Epstein and Penner~\cite{MR918457}, 
which was used in Penner's decomposition of decorated 
Teichm\"uller space~\cite{Penner1987}. Cooper and 
Long~\cite{CL} state that their construction can be 
used to define a decomposition of the decorated moduli 
space $\widetilde{\Teich}_+(S_{g,n}),$ but that it is 
not known whether all components of this decomposition 
are cells.

As in the classical setting, there is a principal 
$\R^n_+$ foliated fibration 
$\widetilde{\Teich}_+(S_{g,n}) \to {\Teich}_+(S_{g,n}),$ 
and different points in a fibre above a point of 
${\Teich}_+(S_{g,n})$ may lie in different components 
of the decomposition of $\widetilde{\Teich}_+(S_{g,n}).$ 
However, if there is only one cusp, then all points in a 
fibre lie in the same component, and one obtains a 
decomposition of ${\Teich}_+(S_{g,1}).$

The proofs of cellularity in 
\cite{MR918457, Penner1987} make essential 
use of the hyperbolic metric and in particular the 
Minkowski model for hyperbolic space. One obstacle 
in finding analogous proofs that work in the setting of projective 
geometry lies in the fact that the model geometry varies. 
Whereas every hyperbolic surface is a quotient of the 
interior of the unit disc, one can only
guarantee that a strictly convex 
projective surface is the 
quotient of some open strictly 
convex domain in projective space. But as one varies the 
projective structure, the domain may change to a 
projectively inequivalent domain. Moreover, the geometry 
arises from the Hilbert metric on the domain, which in 
general is a non-Riemannian Finsler metric.

The main contribution of this paper is to give the first evidence towards a positive answer to 
the question of whether Penner's result generalises to $\widetilde{\Teich}_+(S_{g,n})$. We also introduce the concept of \emph{trigonal matrices} in \S\ref{sec:projectivity}, which allow computation of holonomy without the introduction of cube roots, and the concept of \emph{cloverleaf position} in \S\ref{sec:clovers}, which normalises domains to vary compactly.

We show that for the once-punctured torus $S_{1,1}$, the 
decomposition of $\Teich_+(S_{1,1})$ is indeed an ideal 
cell decomposition, which is invariant under the action of the mapping class group. Moreover, there is a natural bijection between the cells and the ideal cell decompositions of $S_{1,1}.$ This is stated formally as Theorem~\ref{thm:main} in \S\ref{sec:Convex hull constructions}. The analogous statement for the decorated moduli space $\widetilde{\Teich}_+(S_{0,3})$ of the thrice-punctured sphere $S_{0,3}$ is also shown (see Theorem~\ref{thm:main2} in \S\ref{sec:S03}).

In addition to giving evidence towards a generalisation of Penner's result, our methods show 
that on the one hand, the parametrisation due to Fock and 
Goncharov~\cite{FG2006, FG2007} makes the computation of the 
decomposition of moduli space feasible, and that it may also 
provide the right theoretical framework for a general proof. 
We also show that our computational tools allow a systematic 
study of deformations and degenerations of strictly convex 
projective structures  in \S\ref{sec:clovers} and discuss further directions in \S\ref{sec:conclusion}.


\section{Ideal cell decompositions of surfaces}

An \emph{ideal cell decomposition} of $S_{g,n}$ consists of 
a union $\Delta$ of pairwise disjoint arcs connecting (not 
necessarily distinct) punctures with the properties that no 
two arcs are homotopic (keeping their endpoints at the 
punctures) and that each component of $S_{g,n} \setminus \Delta$ 
is an open disc. The arcs are called \emph{ideal edges}. We 
regard two ideal cell decompositions as the same if they 
are isotopic (keeping the endpoints of all arcs at the 
punctures). The set of (isotopy classes of) ideal cell 
decompositions of $S_{g,n}$ has the structure of a 
partially ordered set, with the partial order given by 
inclusion. Given ideal cell decompositions $\Delta_1$ 
and $\Delta_2$ we always understand statements such as 
``$\Delta_1=\Delta_2,$'' ``$\Delta_1\subseteq \Delta_2$'' 
or ``$\Delta_1\cap\Delta_2\neq \emptyset$'' up to isotopy.

For instance, in the case of $S_{1,1},$ an ideal cell 
decomposition either has two ideal edges and its 
complement is an \emph{ideal quadrilateral} or it has 
three ideal edges and its complement consists of two 
\emph{ideal triangles}. We call the latter an \emph{ideal triangulation} and the former an \emph{ideal quadrilation} of $S_{1,1}.$
An ideal quadrilateral can be 
divided into two triangles in two different ways, 
depending on which diagonal is used to subdivide it. The
space of all ideal cell decompositions of $S_{1,1}$ is 
naturally identified with the infinite trivalent tree. 
Vertices of the tree correspond to ideal triangulations 
and there is an edge between two such triangulations 
$\Delta_0$ and $\Delta_1$ if and only if $\Delta_0$ is 
obtained from $\Delta_1$ by deleting an ideal edge $e$ 
(hence creating an ideal quadrilateral) and then 
inserting the other diagonal of the quadrilateral. This is called an \emph{edge flip} or \emph{elementary move}. The 
ideal cell decomposition $\Delta_1 \setminus \{e\}$ is 
associated with the edge in the tree with endpoints $\Delta_0$ and $\Delta_1.$

Floyd and Hatcher~\cite{FH1982} identify this tree with the dual tree to the 
\emph{modular tessellation} or \emph{Farey tessellation} of the hyperbolic plane. An excellent illustration of this (including the information about edge flips) can be found in Lackenby~\cite{Lackenby}. The tiles in the modular tessellation are
ideal triangles with the properties 
\begin{enumerate}
\item each vertex is a rational number or $\infty = \frac{1}{0}$, 
\item if $\frac{p}{q}$ and $\frac{r}{s}$ are two vertices of the same ideal triangle, then $ps-rq=\pm 1$,
\item the set of vertices of each ideal triangle is of the form $\{\frac{p}{q}, \frac{r}{s}, \frac{p+r}{q+s}\}$
\end{enumerate}
The full tessellation can thereby be generated from the ideal triangle with vertices $\frac{0}{1}, \frac{1}{0}, \frac{1}{1}$ and the ideal triangle with vertices $\frac{1}{0}, \frac{-1}{1}, \frac{0}{1}$. 
Moreover, the element of the mapping class group taking one ideal triangulation to another can be determined from this information.

\section{Convex hull constructions}
\label{sec:Convex hull constructions}

We summarise some key definitions and results that can 
be found in \cite{Marq1, CLT1}.
A \emph{strictly convex projective surface} is 
$S = \Omega / \Gamma,$ where $\Omega$ is an open 
strictly convex domain in the real projective plane 
with the property that the closure of $\Omega$ is contained 
in an affine patch, and $\Gamma$ is a torsion-free discrete 
group of projective transformations leaving $\Omega$ 
invariant. Since there is an analytic isomorphism 
$\PGL(3, \R)\cong \SL(3,\R),$ we may assume $\Gamma < \SL(3,\R).$

The Hilbert metric on $\Omega$ can be used to define 
a notion of volume on $S,$ and we are interested in the 
case where $S$ is non-compact but of finite volume. Then 
the ends of $S$ are cusps, and the holonomy of each cusp 
is conjugate to the \emph{standard parabolic}
\[
\begin{pmatrix} 1 & 1 & 0 \\ 0 & 1 & 1\\ 0 & 0 & 1\end{pmatrix},
\]
and its unique fixed point on $\partial \Omega$ is 
called a \emph{parabolic fixed point}.

Cooper and Long~\cite{CL} associate ideal cell decompositions 
to cusped strictly convex projective surfaces of finite volume 
as follows. Suppose $S = \Omega / \Gamma$ is homeomorphic with
 $S_{g,n}.$ The $(\SL(3, \R), \RP)$--structure of $S$ lifts to 
a $(\SL(3, \R), \mathbb{S}^2)$--structure. We denote a lift 
of $\Omega$ to $\mathbb{S}^2\subset \R^3$ by $\Omega^+.$
A \emph{light-cone representative} of $p \in \partial \Omega$ 
is a lift 
$v_p \in \mathcal L = \mathcal L^+ = \R^+ \cdot \partial \Omega^+.$
Each cusp $c$ of $S$ corresponds to an orbit of parabolic fixed 
points on $\partial \Omega.$ Choose an orbit representative
$p_c \in \partial \Omega,$ and hence a light-cone representative 
$v_{c} = v_{p_c} \in \mathcal L.$ The set 
$B = \{ \Gamma \cdot v_c \mid c \text{ is a cusp of } S\}$ 
is discrete. Let $C$ be the convex hull of $B.$ Then the 
projection of the faces of $\partial C$ onto $\Omega$ is 
a $\Gamma$--invariant ideal cell decomposition of $\Omega,$ 
and hence descends to an ideal cell decomposition of 
$\Omega/\Gamma,$ called an \emph{Epstein-Penner decomposition} 
by Cooper and Long. 

Varying the light-cone representatives 
$v_c$ gives a $(n-1)$--parameter family of 
$\Gamma$--invariant ideal cell decompositions of $\Omega$. 
Note that if each 
face of $C$ is a triangle, then a small perturbation 
of the lengths of the $p_c$ will not change the 
combinatorics of $C.$ Also, in the case of one cusp, varying the length 
of $p_c$ merely dilates $C$ and hence does not change 
the combinatorics of the convex hull. In particular, the decomposition of the surface 
$\Omega/\Gamma$ is canonical if $n = 1.$ However, if there is more than one 
cusp, then varying the length of just one $p_c$  
will eventually result in different decompositions, since it 
changes the relative heights of the vertices of $C.$

To summarise, given $p\in \widetilde{\Teich}_+(S_{g,n}),$ the convex 
hull construction by Cooper and Long~\cite{CL} associates 
to $p$ a canonical ideal cell decomposition $\Delta_p.$ 
Analogous to Penner~\cite{Penner1987}, 
define for any ideal cell decomposition 
$\Delta\subset S_{g,n}$ the sets
\begin{align*}
\mathring{\mathcal{C}}(\Delta) &= \{ p \in \widetilde{\Teich}_+(S_{g,n}) \mid \Delta_p = \Delta\},\\
{\mathcal{C}}(\Delta) &= \{ p \in \widetilde{\Teich}_+(S_{g,n}) \mid \Delta_p \subseteq \Delta\}.
\end{align*}
As in the classical case, we have 
${\mathcal{C}}(\Delta_1) \cap {\mathcal{C}}(\Delta_2) \neq \emptyset$ 
if and only if $\Delta_1\cap \Delta_2$ is an ideal cell 
decomposition of $S_{g,n},$ and in this case 
${\mathcal{C}}(\Delta_1) \cap {\mathcal{C}}(\Delta_2) = {\mathcal{C}}(\Delta_1\cap \Delta_2)$.
Moreover, if there is just one puncture, we may replace $\widetilde{\Teich}_+(S_{g,1})$ with ${\Teich}_+(S_{g,1})$ in the above definitions.

We can now state the main theorem of this paper:

\begin{theorem}\label{thm:main}
The set
\[
\{ \mathring{\mathcal{C}}(\Delta) \mid \Delta \text{ is an ideal cell decomposition of } S_{1,1}\}
\]
is an ideal cell decomposition of $\Teich_+(S_{1,1})$ 
that is invariant under the action of the mapping class group. Moreover, $\mathring{\mathcal{C}}$  is a natural bijection between the cells and the ideal cell decompositions of $S_{1,1}.$
\end{theorem}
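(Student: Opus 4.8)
The plan is to reduce the theorem to a finite, explicit computation by exploiting the specific structure of $S_{1,1}$: its space of ideal cell decompositions is the infinite trivalent tree (dual to the Farey tessellation), and the mapping class group acts on this tree and on $\Teich_+(S_{1,1})$ compatibly. So I would set up three things in parallel. First, I would fix a reference ideal triangulation $\Delta_0$ and use the Fock--Goncharov coordinates to parametrise $\Teich_+(S_{1,1})$; since the surface has one cusp, the foliated fibration collapses and the Cooper--Long decomposition descends to $\Teich_+(S_{1,1})$ as noted in the excerpt. Second, I would compute, for each point $p$, the convex hull $C$ of the orbit $B = \Gamma\cdot v_c$ from the holonomy (using the trigonal matrices of \S\ref{sec:projectivity} to avoid cube roots), and read off $\Delta_p$. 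Third, I would use the $\mathcal{C}(\Delta_1)\cap\mathcal{C}(\Delta_2) = \mathcal{C}(\Delta_1\cap\Delta_2)$ intersection property already established in the excerpt, which handles how cells fit together once each individual cell is understood.

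The heart of the argument is to show that each $\mathring{\mathcal{C}}(\Delta)$ is a cell of the correct dimension and that the $\mathring{\mathcal{C}}(\Delta)$ partition $\Teich_+(S_{1,1})$. The key simplification is \emph{equivariance}: the mapping class group acts transitively on ideal triangulations (vertices of the Farey tree) and on the edges representing quadrilations. So it suffices to analyse \emph{finitely many} representative cells up to this action --- essentially one top-dimensional cell $\mathring{\mathcal{C}}(\Delta_{\mathrm{tri}})$ for a triangulation and the adjacent codimension-one wall $\mathring{\mathcal{C}}(\Delta_{\mathrm{quad}})$ for a quadrilation --- and then translate the conclusion over the whole tree by the group action. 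For the representative cells I would write down the condition ``$\Delta_p = \Delta$'' as a system of polynomial (in)equalities in the Fock--Goncharov coordinates: the convex hull has a given combinatorial type exactly when certain determinants comparing the heights of the lifted vertices $v_c \in \mathcal{L}$ have prescribed signs. These are the Epstein--Penner convexity conditions, transported to the projective setting via the light-cone $\mathcal{L} = \R^+\cdot\partial\Omega^+$.

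I would then invoke algorithmic real algebraic geometry --- cylindrical algebraic decomposition or a suitable semialgebraic-set analysis --- to verify rigorously that each such locus is nonempty, open (for a triangulation), and homeomorphic to a ball of the expected dimension, and that together with the lower-dimensional loci they exhaust the moduli space with no gaps or overlaps. Concretely, I expect to show that the defining determinant inequalities cut out a connected semialgebraic set that deformation-retracts to a point, and that the boundary strata (where one determinant vanishes, triggering a diagonal flip) match exactly the quadrilation cells and the combinatorics of edge flips in the Farey tree. The \emph{main obstacle} I anticipate is the verification that these semialgebraic cells are genuinely topological balls and that their incidence structure reproduces the trivalent tree globally: local openness and nonemptiness are comparatively routine, but establishing contractibility of each cell and ruling out unexpected degenerations (where the convex hull could acquire extra faces or a vertex of $B$ fails to be extreme) requires the full strength of the algebraic-geometry computation, and this is where the \emph{cloverleaf position} of \S\ref{sec:clovers} --- normalising the domains $\Omega$ to vary over a compact family --- becomes essential to guarantee that no combinatorial type escapes to infinity as $p$ varies. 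Once each cell is understood and the group action propagates the result, the final bijection $\mathring{\mathcal{C}}$ between cells and ideal cell decompositions, and its mapping-class-group invariance, follow directly from the construction.
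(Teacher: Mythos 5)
Your overall skeleton matches the paper's: Fock--Goncharov coordinates, transitivity of the mapping class group on ideal triangulations and on ideal quadrilations to reduce to one representative cell of each type, formulation of the hull conditions as semialgebraic (in)equalities in those coordinates, and an appeal to cylindrical algebraic decomposition. But the heart of your plan --- certifying that each representative cell is a topological ball --- has a genuine gap. You propose to have CAD verify that each locus is ``homeomorphic to a ball of the expected dimension,'' falling back on showing that it ``deformation-retracts to a point.'' Neither works as stated: CAD outputs a decomposition into cells together with adjacency data, not a homeomorphism type of their union, and contractibility of an open subset of $\R^6$ does not imply that it is a ball (Whitehead-type phenomena). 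So the crucial step, applied directly to the $6$--dimensional locus $\mathring{\mathcal{C}}(\Delta)$ of a triangulation, would fail or would require a substantial idea you have not supplied.

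The paper sidesteps exactly this difficulty by never analysing the top-dimensional cell directly. It proves (Lemma~\ref{lem:flatdisc}) that each codimension-one flatness locus $\{CB=1\}$ is a smooth, properly embedded $5$--disc, by solving $CB=1$ for $e_{20}$ by hand and exhibiting the set explicitly as two $5$--discs (graphs of rational functions over the regions above $p\uparrow q$ and below $p\downarrow q$) glued along a $4$--disc; CAD (\texttt{qepcad}) is invoked only once, for the yes/no question of whether two flatness loci intersect (Lemma~\ref{lem:flatdisjoint}) --- an emptiness test, which is precisely what CAD is good at. The triangulation cell is then a ball for purely topological reasons: three disjoint, smooth, properly embedded $5$--discs cut the $6$--disc $\Teich_+(S_{1,1})$ into four open $6$--discs, and $\mathring{\mathcal{C}}(\Delta)$ is the one having all three walls in its frontier. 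Finally, cloverleaf position plays no role in the proof of Theorem~\ref{thm:main}; it is introduced afterwards in \S\ref{sec:clovers} to study deformations and degenerations, so your claim that it is ``essential to guarantee that no combinatorial type escapes to infinity'' attributes to the argument a tool it does not use.
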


The proof will be given in \S\ref{proof of main}, and the analogous statement for the thrice-punctured sphere is proved in  \S\ref{sec:S03}. First, some general results are developed in \S\ref{sec:projectivity}, the computation of holonomy is discussed in \S\ref{sec:monodromy}, and the coordinates for 
$\Teich_+(S_{1,1})$ are derived in \S\ref{sec:periphery}. Possible applications and further directions are discussed in \S\ref{sec:clovers} and \S\ref{sec:conclusion}.


\section{Projectivity}
\label{sec:projectivity}
Fock and Goncharov discovered in \cite{FG2006} that
the moduli space of mutually inscribed and
circumscribed triangles (from the perspective
of some affine patch in $\RP$) is naturally isomorphic
to the positive real line. We now develop an explicit formulation of this isomorphism, introducing the new concept of a \emph{(standard) trigonal matrix}. 

An element
of $\mathcal{P}^+_3$ is the projectivity
class of a combination of
three points in $\RP$ in general position
and three lines through those points such
that, from the perspective of some affine
patch, the triangle formed by the points
lies inside the trilateral formed by the
lines, as in the left of Figure \ref{fig:ayenay}.
\begin{figure}
\begin{center}
{\includegraphics{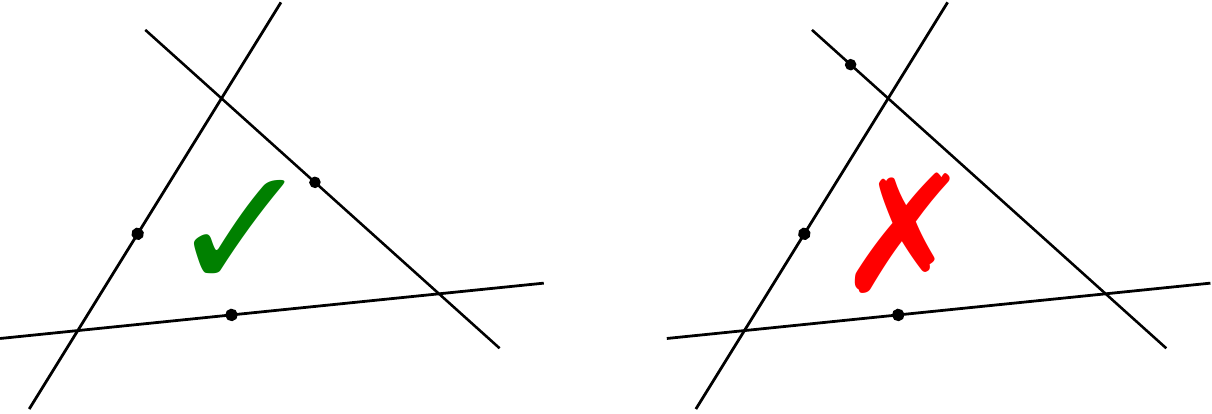}}
\end{center}
\caption{Representatives of an element and a non-element of $\mathcal{P}^+_3$.}\label{fig:ayenay}
\end{figure}
In terms more amenable to
calculation, such a triangle and trilateral are the projectivisations,
respectively, of a triple $(V_0,V_1,V_2)$ of vectors in $\R^3$
and a triple $(v_0,v_1,v_2)$ of covectors such that
$v_i.V_j \geq 0$, with equality only when $i = j$.

This is viewed as a pair $(V.\Delta,\Delta.v)$ of
left and right cosets of the subgroup $\Delta$
of diagonal matrices in $\GL(3, \R)$
admitting a representative $(V,v)$ such that
$v.V$ is a positive counter-diagonal
matrix, i.e. a matrix of the form
\[
\begin{pmatrix}
0 & + & +\\
+ & 0 & +\\
+ & + & 0
\end{pmatrix},
\]
where the $+$ entries are positive numbers, possibly different.

Let 
\begin{equation}\label{eqn:sigma}
\sigma = \begin{pmatrix} 0 & 1 & 0\\ 0 & 0 & 1\\ 1 & 0 & 0 \end{pmatrix}.
\end{equation}
If $M$ is positive counter-diagonal, then $\sigma.M.\sigma^{-1}$
is again positive counter-diagonal. The fixpoints of this
$\Z/3\Z$ action are called \emph{trigonal}. 
We will show that every element of $\mathcal{P}^+_3$
admits as representative a pair $(V.\Delta,\Delta.v)$ with $v.V$ being some
trigonal matrix. However, the space of trigonal matrices
is two-dimensional, and our intent is to show that $\mathcal{P}^+_3$
is one-dimensional. So we should like to produce,
for any such pair, a canonical trigonal matrix. One possible
choice of trigonal matrix (with one free parameter) is the \emph{standard trigonal} matrix
\begin{equation}\label{eq:cantri}
C^f_3 = 
\begin{pmatrix}
0 & f & 1\\
1 & 0 & f\\
f & 1 & 0
\end{pmatrix}.
\end{equation}

\begin{proposition}\label{prp:dblcos}
Every double coset of the form $\Delta P \Delta$ in $\GL(3,\R)$
with $P$ a positive counter-diagonal matrix admits
a unique standard trigonal representative.
\end{proposition}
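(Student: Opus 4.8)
The plan is to find a numerical invariant of the double coset $\Delta P \Delta$ that pins down the parameter $f$, and then to solve explicitly for the diagonal factors. For a matrix $M=(m_{ij})$ with zero diagonal, set
\[
\rho(M) = \frac{m_{12}\,m_{23}\,m_{31}}{m_{13}\,m_{21}\,m_{32}},
\]
the ratio of the two cyclic products of off-diagonal entries. Writing $D_1=\operatorname{diag}(a_1,a_2,a_3)$ and $D_2=\operatorname{diag}(b_1,b_2,b_3)$, the passage $M \mapsto D_1 M D_2$ replaces $m_{ij}$ by $a_i m_{ij} b_j$, so each of the two cyclic products is multiplied by the common factor $(\det D_1)(\det D_2)=a_1a_2a_3\,b_1b_2b_3$. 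Hence $\rho$ is constant on the double coset $\Delta P \Delta$. This is the key observation.

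Uniqueness then follows at once. A direct computation gives $\rho(C^f_3)=f^3$. Since $P$ is positive counter-diagonal we have $\rho(P)>0$, and $t\mapsto t^3$ is a bijection of $\R$, so if $C^f_3$ lies in $\Delta P\Delta$ then necessarily $f^3=\rho(P)$; that is, $f=\rho(P)^{1/3}$ is the unique admissible value of the parameter, and it is automatically positive.

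For existence I would fix $f=\rho(P)^{1/3}>0$ and solve the six scalar equations $a_i\,p_{ij}\,b_j=(C^f_3)_{ij}$ over the off-diagonal positions $i\neq j$. Taking logarithms turns these into the additive system $\alpha_i+\beta_j=c_{ij}$, where $\alpha_i=\log a_i$, $\beta_j=\log b_j$ and $c_{ij}=\log\!\big((C^f_3)_{ij}/p_{ij}\big)$. The six positions $i\neq j$ are the edges of the bipartite graph on rows versus columns obtained from $K_{3,3}$ by deleting a perfect matching, which is a single hexagonal cycle; such a system is solvable precisely when the alternating sum of the $c_{ij}$ around this cycle vanishes, and one checks that this condition is exactly $f^3=\rho(P)$. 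With $f$ chosen as above it therefore has a solution, and exponentiating produces positive diagonal matrices $D_1,D_2$ with $D_1PD_2=C^f_3$.

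The argument is short, and the only point needing care is the existence step: one must verify that the cocycle condition governing solvability of the additive system coincides with the vanishing forced by the invariant $\rho$. This is routine — equivalently, one can simply normalise $a_1=1$, read off $b_3,b_2,a_2,b_1,a_3$ in turn from five of the equations, and observe that the sixth equation holds if and only if $f^3=\rho(P)$, which simultaneously supplies the consistency check for existence and recovers the uniqueness of $f$.
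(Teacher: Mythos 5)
Your proof is correct, and it arrives at the paper's value $f=\bigl(P_{01}P_{12}P_{20}/(P_{02}P_{10}P_{21})\bigr)^{1/3}$ by a genuinely different route. The paper's own proof is purely computational: it sets up the matrix equation $C.P.D=C^f_3$ and has Maxima solve it symbolically (Listing~\ref{list:maxima0}), recording that solutions exist precisely for this $f$; the reason for the formula stays inside the computer. You instead make the mechanism explicit: the ratio $\rho$ of the two cyclic products of off-diagonal entries is invariant under $M\mapsto D_1MD_2$, since both cyclic products scale by the same factor $\det D_1\cdot\det D_2$, so $\rho$ is a double-coset invariant; combined with $\rho(C^f_3)=f^3$ and the fact that cubing is a bijection of $\R$, this gives uniqueness of $f$ at once. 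Existence then reduces, after taking logarithms, to a linear system over the six off-diagonal positions, which form a single hexagonal cycle ($K_{3,3}$ minus the diagonal matching), so the only solvability obstruction is the alternating sum around that cycle, and you correctly identify this obstruction with $\log f^3-\log\rho(P)$. Your closing remark --- normalise $a_1=1$ and solve five equations sequentially, the sixth being the consistency check --- is the same computation in elementary form, and it also exhibits the one-parameter scaling freedom in $C$ and $D$ that the paper notes. What your argument buys is a machine-free, hand-checkable proof that explains \emph{why} the cube root of exactly that ratio appears (it is the unique invariant of the $\Delta$-biaction in play); what the paper's approach buys is brevity and uniformity with the rest of its computer-algebra pipeline, where the same Maxima session is reused for the subsequent holonomy and flatness computations.
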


\begin{proof}
Let $P$ be a positive counter-diagonal matrix in $\GL(3,\R)$.
We need to solve
\begin{equation}\label{eqn:pcdm}
C.P.D =
\begin{pmatrix}
0 & f & 1\\
1 & 0 & f\\
f & 1 & 0
\end{pmatrix}
\end{equation}
for $C,D$, and $f$. We should
end up with a parameter
space of solutions for $C$ and $D$, since we can scale
both $C$ and $D$ by scalars, but $f$
should be uniquely determined.

A direct calculation (see \S\ref{app:listings}, Listing~\ref{list:maxima0}) shows that
(\ref{eqn:pcdm}) admits solutions in $C,D,f$
precisely when
\[
f = \left ( \frac{P_{01}\cdot P_{12} \cdot P_{20}}
                 {P_{02}\cdot P_{10} \cdot P_{21}} \right ) ^ {1/3},
\]
where the $P_{i j}$ are the entries of $P.$
This concludes the proof of the proposition.
\end{proof}

Next, assuming that $v.V$ is standard trigonal, we wish to pick
$m \in \GL(3, \R)$ such that $v.m^{-1}$ and $m.V$ are
as nice as possible. To achieve this, we break duality between $v$ and $V$ here
and just let $m=V^{-1}$.
Even if $v.V$ is not assumed standard trigonal, we have:
\begin{theorem}\label{thm:proj}
Suppose $V,v \in \GL(3,\R)$ such that $v.V$ is
positive counter-diagonal. Let $m = (V.D)^{-1}$, where
$D$ is diagonal with $\lambda_0,\lambda_1,\lambda_2$
along its diagonal, and where
\begin{equation}\label{eqn:lambdas}
\lambda_0^3 = \frac{(v.V)_{1\,2} \cdot (v.V)_{2\,1}}
                    {(v.V)_{1\,0} \cdot (v.V)_{2\,0}},\qquad
\lambda_1^3 = \frac{(v.V)_{2\,0} \cdot (v.V)_{0\,2}}
                    {(v.V)_{2\,1} \cdot (v.V)_{0\,1}},\qquad
\lambda_2^3 = \frac{(v.V)_{0\,1} \cdot (v.V)_{1\,0}}
                    {(v.V)_{0\,2} \cdot (v.V)_{1\,2}}.
\end{equation}
Then $m.V.\Delta = \Delta$ and $\Delta.v.m^{-1} = \Delta.C_f^3$,
where $f$ is as in the proof of Proposition \ref{prp:dblcos} with
$P = v.V$.
\end{theorem}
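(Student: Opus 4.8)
The plan is to handle the two assertions separately: the identity $m.V.\Delta=\Delta$ is a one-line observation, and the identity $\Delta.v.m^{-1}=\Delta.C_f^3$ reduces to the algebraic fact already extracted in the proof of Proposition~\ref{prp:dblcos}.

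First I would dispatch $m.V.\Delta=\Delta$. Since $m=(V.D)^{-1}=D^{-1}.V^{-1}$, we have $m.V=D^{-1}$, which is diagonal; hence $m.V\in\Delta$ and $m.V.\Delta=\Delta$. Nothing more is required here.

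For the second assertion I would set $P=v.V$ and note that $m^{-1}=V.D$, so that $v.m^{-1}=v.V.D=P.D$. Now $\Delta.v.m^{-1}=\Delta.C_f^3$ holds precisely when $P.D=E.C_f^3$ for some diagonal $E$, equivalently when there is a diagonal matrix $C$ (namely $C=E^{-1}$) with $C.P.D=C_f^3$. Writing $C$ for the diagonal matrix with entries $c_0,c_1,c_2$ and recalling that $D$ has diagonal entries $\lambda_0,\lambda_1,\lambda_2$, the relation $(C.P.D)_{ij}=c_iP_{ij}\lambda_j$ shows that the diagonal entries match automatically, while the six off-diagonal entries yield the scalar system
\[
\begin{aligned}
&c_0P_{01}\lambda_1=f,\quad c_0P_{02}\lambda_2=1,\quad c_1P_{10}\lambda_0=1,\\
&c_1P_{12}\lambda_2=f,\quad c_2P_{20}\lambda_0=f,\quad c_2P_{21}\lambda_1=1.
\end{aligned}
\]

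The key step is to solve this system using the prescribed $\lambda_i$. I would first use the three equations with right-hand side $1$ to set $c_0=(P_{02}\lambda_2)^{-1}$, $c_1=(P_{10}\lambda_0)^{-1}$, and $c_2=(P_{21}\lambda_1)^{-1}$, all positive since $P$ is positive counter-diagonal and the $\lambda_i$ are positive real cube roots of ratios of positive numbers. Substituting these into the remaining three equations reduces the whole system to the three consistency conditions
\[
\frac{P_{01}\lambda_1}{P_{02}\lambda_2}=f,\qquad \frac{P_{12}\lambda_2}{P_{10}\lambda_0}=f,\qquad \frac{P_{20}\lambda_0}{P_{21}\lambda_1}=f.
\]
The heart of the argument, and the only genuine computation, is verifying these three identities: cubing each ratio and substituting the given values of $\lambda_0^3,\lambda_1^3,\lambda_2^3$ collapses it to $P_{01}P_{12}P_{20}/(P_{02}P_{10}P_{21})$, which is exactly $f^3$ as determined in Proposition~\ref{prp:dblcos}. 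Because every quantity in sight is positive, equality of cubes passes to equality of the positive real cube roots, so all three conditions hold at once with the common value $f$, and the desired diagonal $C$ exists.

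I expect the main obstacle to be pure bookkeeping: keeping the six indices straight so that each cube simplifies to the stated expression. There is no conceptual difficulty, since the $\lambda_i$ have plainly been reverse-engineered from exactly these consistency conditions. As a consistency check I would also verify that the three cube-root arguments multiply to $1$, so that $\lambda_0\lambda_1\lambda_2=1$ and hence $D\in\SL(3,\R)$; this pins down the one-parameter scaling freedom in the pair $(C,D)$ of Proposition~\ref{prp:dblcos} and explains why the theorem singles out this particular representative.
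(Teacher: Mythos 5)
Your proof is correct and takes essentially the same route as the paper: both arguments reduce the theorem to showing that the prescribed diagonal $D$ (with entries the $\lambda_i$) admits a diagonal $C$ such that $C.(v.V).D = C_f^3$, invoking Proposition~\ref{prp:dblcos} for the value of $f$. The only difference is one of execution rather than substance: you verify this by hand (solving three of the six off-diagonal equations for the $c_i$ and checking the remaining three consistency conditions by cubing and substituting the $\lambda_i^3$, with positivity allowing you to take cube roots), whereas the paper delegates exactly this check to the computer-algebra computation in Listings~\ref{list:maxima1} and \ref{list:out1}.
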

\begin{proof}
By assumption, $v.V$ is positive counter-diagonal.
By Proposition \ref{prp:dblcos}, there exist
$C$ and $D$ such that $C.(v.V).D = (C.v).(V.D)$ are standard trigonal.
Then $m = (V.D)^{-1}$ is an element
of $\GL(3,\R)$ such that the image of
$(V,v)$ under $m$ and $(I_3,C_3^f)$ project
to the same configuration. This proves
the second portion of the theorem, reducing
our proof obligations to verifying that
$D$ as defined by (\ref{eqn:lambdas})
admits the existence of $C \in \Delta$ such
that $(C.v).(V.D)$ is standard trigonal. 
This can be verified by a direct calculation (see \S\ref{app:listings}, Listings~\ref{list:maxima1} and \ref{list:out1}).
\end{proof}


\section{Developing map and holonomy}
\label{sec:monodromy}
Let $S = S_{1,1}$ be a once-punctured torus with
a fixed ideal triangulation $\Delta_\ast$ and a fixed orientation. 
We slightly modify the framework of \cite{FG2006} 
to parameterise the marked strictly convex
projective structures with finite volume on $S.$ We also note that
Fock and Goncharov treat the more general space of framed
structures with geodesic boundary, of which the finite-volume
structures form a proper subset of positive codimension.
Lift $\Delta_\ast$ to the universal cover 
$\phi: \widetilde{S} \to S,$ denote the lifted triangulation by
$\widetilde{\Delta}_\ast$, and identify the group of deck transformations with $\pi_1(S).$
The developing map $\dev : \widetilde{S} \to \Omega$
for such a structure sends $\widetilde{\Delta}_\ast$
to an ideal triangulation $\tilde{\tau}$ of $\Omega$, 
which we may assume has straight edges (see \cite{WoTi}).
The edges
of this ideal triangulation have well-defined
endpoints on $\partial \Omega$, and the finite volume condition 
implies $\Omega$ is round, meaning
that every point on its boundary admits a unique 
tangent line (see \cite{CLT1}). Any triangle of $\widetilde{\Delta}_\ast$
therefore inherits an associated combination of
three points in $\RP$ and three lines through
these points such that, by strict convexity of $\Omega$,
the triangle formed by these points lies inside the
trilateral formed by the lines as in the left of Figure \ref{fig:ayenay}.

Associated to the geometric structure is a \emph{holonomy} $\hol: \pi_1(S)\to \SL(3,\R)$ 
which makes $\dev$ equivariant under the action of $\pi_1(S)$. 
That is, denoting $\Gamma = \hol(\pi_1(S))$, the map $\dev$
takes $\pi_1(S)$--equivalent triangles of $\widetilde{\Delta}_\ast$ to
$\Gamma$--equivariant triangles of $\tilde{\tau}$ in $\Omega$.
Finally, any two such developing maps for the same structure differ by a projectivity. In conclusion, then, for any triangle of $\Delta_\ast$, we get an associated element of $\mathcal{P}_3^+$, and
thereby, via Proposition~\ref{prp:dblcos} and Theorem~\ref{thm:proj},
a well-defined positive real number $f$. 
Likewise, to any oriented edge $\tilde{e}$ of $\widetilde{\Delta}_\ast$,
associate the pair $(\tilde{t},\tilde{t}')$ of triangles
in $\widetilde{\Delta}_\ast,$ where $\tilde{e}$ is adjacent
to both, and where the orientation of $\tilde{t}$ induces the orientation of $\tilde{e}$.
We may suppose that in an affine patch, the images
of $\tilde{t},\tilde{t}'$, and the 
flags attached to their vertices, look as shown in Figure~\ref{fig:nearedge}.

\begin{figure}[h]
\begin{center}
\includegraphics{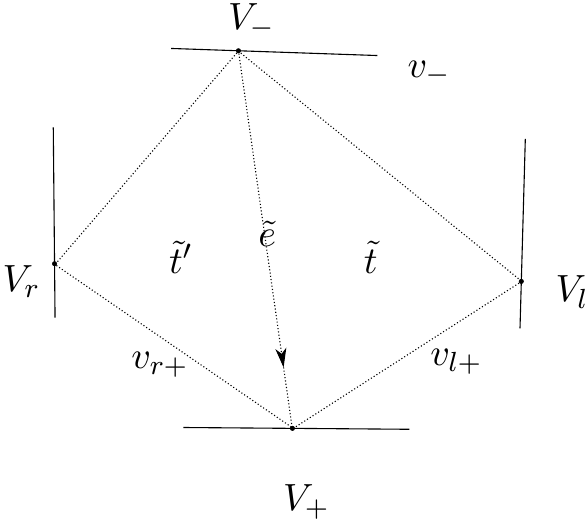}
\end{center}
\caption{Near the image of an oriented edge $\tilde{e}$: the labels match the orientation}\label{fig:nearedge}
\end{figure}

The flags depicted admit (nonzero) vector-covector representatives
$(V_i,v_i)$ for $i \in \{+,-,l,r\}$,
such that $v_i . V_i = 0$.
Let $v_{r+}$ be a covector representing the line through $V_+$ and $V_r$,
and likewise let $v_{l+}$ be a covector representing the
line through $V_+$ and $V_l$. Let
$v$ be the matrix whose rows are $v_-,v_{r+},v_{l+}$
and $V$ the matrix whose columns are
$V_-,V_r,V_l$. Letting $[x],[X]$ denote
the projections to $\RP$ of covectors $x$
and vectors $X$, we can associate
to this oriented edge $\tilde{e}$ the
triple of flags $(([v_-],[V_-]),([v_{r+}],[V_r]),([v_{l-}],[V_l]))$,
whose projectivity class is some element of $\mathcal{P}_3^+$,
to which we can associate a single, positive real number $f$ as in the section above.
(The \emph{triple ratio} of the triangle defined in \cite{FG2006} equals $f^3$.
However, our edge parameters' cubes are the \emph{reciprocals}
of Fock and Goncharov's edge parameters.)

We may then fix the developing map such that it has the following three properties:
\begin{enumerate}
\item the standard
basis for $\R^3$ projects to
the vertices of $\dev(\tilde{t})$;
\item $(1,0,0)^t$ and $(0,1,0)^t$
project to $V_-$ and $V_+$, respectively; and
\item
the kernels of the covectors $(0,t_{012},1)$,
$(1,0,t_{012})$, and $(t_{012},1,0)$
project to lines tangent to the boundary
of the convex domain $\Omega$ which is
the image of the developing map.
\end{enumerate}

This choice of developing map is unique up to isotopy,
given our choices of $\phi,\tilde{t},\tilde{e}$. 
Hence for each point in $\Teich_+(S)$ this fixes a unique developing map, and each such developing map determines a unique point in $\Teich_+(S)$ provided that the holonomy around the cusp is parabolic.

 \begin{figure}[h]
 \begin{center}
\includegraphics{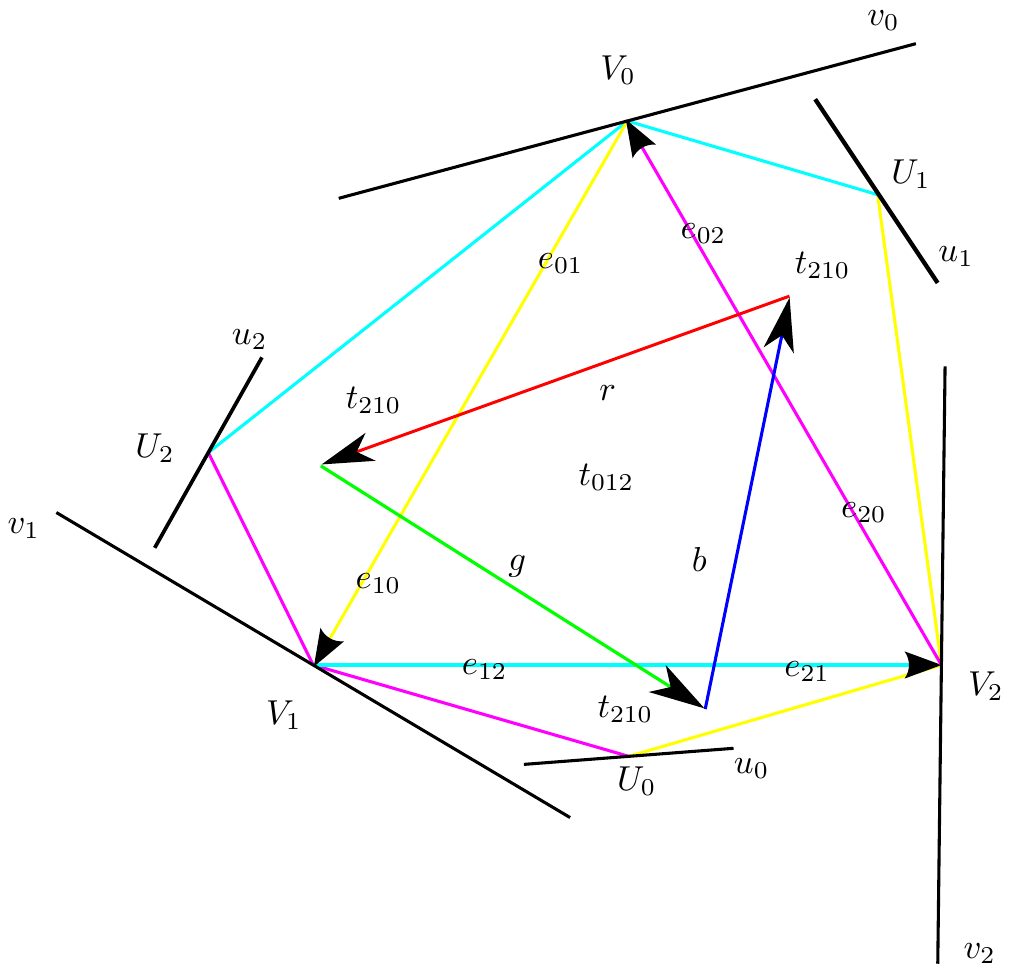}
\end{center}
\caption[Our standard developing map]{Our standard developing map with labels in our modified framework, where
$V_0 = \begin{pmatrix} 1 & 0 & 0 \end{pmatrix}^t$, 
$V_1 = \begin{pmatrix} 0 & 1 & 0 \end{pmatrix}^t$, 
$V_2 = \begin{pmatrix} 0 & 0 & 1 \end{pmatrix}^t$, and 
$v_0 = \begin{pmatrix} 0 &  t_{012} & 1 \end{pmatrix}$, 
$v_1 = \begin{pmatrix} 1 & 0 &  t_{012} \end{pmatrix}$, 
$v_2 = \begin{pmatrix} t_{012} & 1 & 0\end{pmatrix}.$
}
\label{fig:ourparams}
\end{figure}
 
 
\section{Periphery}
\label{sec:periphery} 
 
From the  developing map, we may now calculate the holonomy. To this end, it suffices to determine its values on generators of the fundamental group.
A marking of $\pi_1(S)$ is chosen as follows.
Adjacent to $\tilde{t}$ are three other
triangles which project to the same
triangle $\phi(\tilde{t}')$ of the ideal triangulation
of $S$. In the cyclic 
order induced by the orientation
of $\tilde{t}$, let these triangles be $c,m,y$, with
$y$ being the triangle adjacent to $\tilde{e}$.
Then we may choose for generators the deck
transformations $r,g,b$, where $r$ takes
$m$ to $y$, $g$ takes $y$ to $c$, and $b$ takes
$c$ to $m$.

The images of these deck transformations
now are simple to calculate; we just need
to calculate representatives $(V,v)$ as above
for $c,m,$ and $y$, and then use Theorem~\ref{thm:proj}
to get the holonomy. We can just
focus on $y$, and the other two will follow
by symmetry. 

We already know two flags of $y$: namely,
the first two standard basis vectors and
the first two associated covectors. 
To solve for the other vertex of $y$,
we use the edge parameters $e_{01}$ and $e_{10}$.
To solve for the associated line through
this vertex, we use the other face parameter and the definition of these parameters as triple ratios.
Solving for some element of the vertex of $y$ (regarding the vertex as
a one-dimensional subspace of $\R^3$) gives the vertex $U_2$ (see \S\ref{app:listings}, Listing~\ref{list:maxima6}, for the computation):
\[
U_2 = \left \langle \begin{pmatrix} (e_{1 0}^3 + 1) \cdot t_{0 1 2}^2\\
                                    (e_{0 1}^3 + 1) \cdot e_{1 0}^3\\
                                    -e_{1 0}^3 \cdot t_{0 1 2}
                    \end{pmatrix} \right \rangle.
\]

By symmetry (or by independent calculations), we conclude
\[
U_0 = \left \langle \begin{pmatrix} -e_{2 1}^3 \cdot t_{0 1 2}\\
                                    (e_{2 1}^3 + 1) \cdot t_{0 1 2}^2\\
                                    (e_{1 2}^3 + 1) \cdot e_{2 1}^3
                    \end{pmatrix} \right \rangle,
\qquad \qquad
U_1 = \left \langle \begin{pmatrix} (e_{2 0}^3 + 1) \cdot e_{0 2}^3\\
                                    -e_{0 2}^3 \cdot t_{0 1 2}\\
                                    (e_{0 2}^3 + 1) \cdot t_{0 1 2}^2
                    \end{pmatrix} \right \rangle.
\]

We have now found the other vertices of our configurations $c,m,y$,
so we have all the triangle parts. Care has to be taken to keep the vertices
in a consistent order, so that the holonomy comes out without any extraneous rotation.
(Indeed, the holonomy for $S_{0,3}$ differs only in this respect.)

We next need to compute covector representatives of the
lines through the $U$-vertices. We solve for these
using the other face parameter $t_{210}$ (see \S\ref{app:listings}, Listings~\ref{list:maxima9} and \ref{list:maxima10}).
This gives:
\[
u_2 = \left \langle \begin{pmatrix} e_{0 1}^3 \cdot e_{1 0}^3\ &
                      t_{0 1 2}^2 \cdot t_{2 1 0}^3\ &
                      t_{0 1 2} \cdot \left ( e_{0 1}^3 \cdot t_{2 1 0}^3 +
                                              t_{2 1 0}^3 + e_{0 1}^3 \cdot e_{1 0}^3 + e_{0 1}^3
                                      \right ) \end{pmatrix} \right \rangle
\]
and again, by symmetry or independent computation, we conclude
\[
u_0 = \left \langle \begin{pmatrix}
                      t_{0 1 2} \cdot \left ( e_{1 2}^3 \cdot t_{2 1 0}^3 +
                                              t_{2 1 0}^3 + e_{1 2}^3 \cdot e_{2 1}^3 + e_{1 2}^3
                                      \right )\ &
                      e_{1 2}^3 \cdot e_{2 1}^3\ &
                      t_{0 1 2}^2 \cdot t_{2 1 0}^3 \end{pmatrix} \right \rangle
\]
and
\[
u_1 = \left \langle \begin{pmatrix}
                     t_{0 1 2}^2 \cdot t_{2 1 0}^3\ &
                     t_{0 1 2} \cdot \left ( e_{2 0}^3 \cdot t_{2 1 0}^3 +
                                             t_{2 1 0}^3 + e_{2 0}^3 \cdot e_{0 2}^3 + e_{2 0}^3
                                     \right )\ &
                     e_{2 0}^3 \cdot e_{0 2}^3 \end{pmatrix} \right \rangle.
\]

Recall that $r$ takes $m$ to $y$, $g$ takes $y$ to $c$, and $b$ takes $c$ to $m$.
So with the above results, we can define the trilaterals, completing our
construction of the configurations we need for
the monodromy calculation (see \S\ref{app:listings}, Listing~\ref{list:maxima11}).
The formul\ae\ for $r$, $g$, and $b$ are complicated
and not particularly illuminating,
so we leave them in the internals of the computer.

From the above, the fundamental group of $S = S_{1,1}$ has  
the presentation $\langle r,g,b\ |\ b\cdot g \cdot r\rangle$ and a fixed marking.
The element $r\cdot g \cdot b$ is a peripheral
element, representing a simple closed loop around the cusp. As stated in
\S\ref{sec:Convex hull constructions}, finite volume requires this
peripheral element to have parabolic holonomy.
This means that its characteristic
polynomial is of the form $k\cdot(\lambda - 1)^3$. We
may calculate the conditions this imposes on the
Fock-Goncharov parameters using the characteristic polynomial. 
A direct computation (see \S\ref{app:listings}, Listing~\ref{list:maxima12}) shows that 
the characteristic polynomial of $r\cdot g\cdot b$ is proportional to
$(\lambda - T^3)\cdot(E^3 \cdot \lambda - T^3)\cdot(T^6 \cdot \lambda - E^3),$
where $T$ is the product of the face parameters and $E$ is the product of the
edge parameters. We therefore have: 
\begin{lemma}\label{lem:parhol}
A strictly convex projective structure
on $S_{1,1}$ has parabolic peripheral holonomy
(and finite volume) if and only if
the product of the face parameters and
the product of the edge parameters both equal 1.
\end{lemma}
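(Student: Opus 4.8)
The plan is to extract the lemma from the factored characteristic polynomial that the excerpt has already handed us. We are told that the characteristic polynomial of $r\cdot g\cdot b$ is proportional to
\[
(\lambda - T^3)\cdot(E^3\cdot\lambda - T^3)\cdot(T^6\cdot\lambda - E^3),
\]
where $T$ is the product of the face parameters and $E$ the product of the edge parameters. Finite volume forces the peripheral holonomy to be parabolic, and a parabolic element of $\SL(3,\R)$ conjugate to the standard parabolic has characteristic polynomial $k\cdot(\lambda-1)^3$, i.e. a single eigenvalue $1$ of algebraic multiplicity three. So the entire content of the lemma is the purely algebraic statement: the above cubic has a triple root at $\lambda = 1$ if and only if $T = E = 1$. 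First I would read off the three roots directly from the factored form: the first factor vanishes at $\lambda = T^3$, the second at $\lambda = T^3/E^3$, and the third at $\lambda = E^3/T^6$.

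Next I would impose the parabolicity condition. Since all three roots must coincide and equal $1$, I get the system $T^3 = 1$, $T^3/E^3 = 1$, and $E^3/T^6 = 1$. The first equation gives $T^3 = 1$; combined with the second it gives $E^3 = T^3 = 1$; and the third is then automatically satisfied. Because $T$ and $E$ are products of \emph{positive} real parameters (the face and edge parameters $f$ are positive reals by construction in \S\ref{sec:projectivity} and \S\ref{sec:monodromy}), the only real cube root of $1$ available is $1$ itself, so $T^3 = 1$ forces $T = 1$ and likewise $E = 1$. This establishes the ``only if'' direction.

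For the converse, I would simply substitute $T = E = 1$ back into the factored polynomial: each factor becomes $(\lambda - 1)$, so the characteristic polynomial is proportional to $(\lambda - 1)^3$, which is exactly the parabolic form. One still has to confirm that the holonomy is genuinely parabolic (a single Jordan block) and not merely a diagonalisable or partially split unipotent with repeated eigenvalue $1$; but the excerpt already asserts that the peripheral holonomy is conjugate to the \emph{standard parabolic} whenever the structure has finite volume, and conversely the finite-volume / round-domain hypotheses pin down the conjugacy class, so the repeated-eigenvalue condition suffices here to characterise the locus. Thus parabolic peripheral holonomy holds precisely when $T = E = 1$, which is the assertion of the lemma.

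The only genuinely delicate point — and the step I expect to be the main obstacle — is the logical equivalence between ``characteristic polynomial equal to $(\lambda-1)^3$'' and ``parabolic (hence finite volume).'' Having eigenvalue $1$ with multiplicity three does not by itself guarantee a single $3\times 3$ Jordan block rather than $1\oplus$ (a $2\times2$ block) or the identity, and only the single-block case is the standard parabolic. I would address this by appealing to the structural facts already in place: the finite-volume condition from \S\ref{sec:Convex hull constructions} forces the cusp holonomy into the standard parabolic conjugacy class, so within the family under consideration the eigenvalue condition and the parabolicity condition cut out the same locus. Everything else is a direct root computation from the already-established factorisation, so I would keep the writeup short and lean on Listing~\ref{list:maxima12} for the factorisation itself.
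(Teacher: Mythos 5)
Your proposal is correct and takes essentially the same route as the paper: the paper's proof likewise consists of reading off from the Maxima-computed factorisation $(\lambda - T^3)\cdot(E^3\lambda - T^3)\cdot(T^6\lambda - E^3)$ (Listing~\ref{list:maxima12}) that the characteristic polynomial has the parabolic form $k\cdot(\lambda-1)^3$ precisely when $T=E=1$, with positivity of the Fock--Goncharov parameters ruling out nontrivial cube roots. If anything, you are more scrupulous than the paper about the gap between ``repeated eigenvalue $1$'' and ``conjugate to the standard parabolic'' (a single Jordan block), which the paper passes over silently; appealing to the structure theory of nontrivial elements preserving a strictly convex domain is the right way to close it.
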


To sum up, we now have an identification of $\Teich_+(S_{1,1})$ with 
\[
 \{ (t_{012}, t_{210}, e_{01}, e_{10}, e_{02}, e_{20}, e_{12}, e_{21}) \in \R^8_+ \mid t_{012}t_{210}=1, \ e_{01} e_{10} e_{02} e_{20} e_{12} e_{21}=1 \}.
\]


\section{Cells}
\label{sec:canonicity}
An algorithm to compute the ideal cell decompositions of Cooper and Long~\cite{CL} 
was recently described by Tillmann and Wong~\cite{WoTi}, based on an algorithm for hyperbolic surfaces by Weeks~\cite{MR1241189}.
This algorithm takes as starting point a fixed ideal 
triangulation of $S_{g,n}$ and then computes the 
canonical ideal cell decomposition associated to a point 
in moduli space using an edge flipping algorithm 
followed by possibly deleting redundant edges. This 
allows one to keep track of the marking, the isotopy class 
of every intermediate ideal triangulation, and the 
isotopy class of the final ideal cell decomposition.

We make use of a portion of this work as follows. 
Let $p=(t_{012}, t_{210}, e_{01}, e_{10}, e_{02}, e_{20}, e_{12}, e_{21}) \in \Teich_+(S_{1,1})$
and choose a light-cone representative of the cusp; we pick not
a standard basis vector, but the slightly different vector
\[
S_0 = \begin{pmatrix} e_{2 0} \cdot e_{0 2} \cdot e_{2 1} \\ 0 \\ 0 \end{pmatrix}.
\]
This represents the image of the terminal endpoint of
$\tilde{e}$ as well as the standard basis vector does.
The orbit of this vector under the holonomy of
the fundamental group is some collection of vectors,
including the elements
\[
S_1 = b^{-1}.S_0 = \begin{pmatrix} 0 \\ e_{0 1} \cdot e_{1 0} \cdot e_{0 2} \\ 0 \end{pmatrix},
\qquad \qquad
S_2 = g.S_0 = \begin{pmatrix} 0 \\ 0 \\ e_{1 2} \cdot e_{2 1} \cdot e_{1 0} \end{pmatrix}
\]
which represent the other vertices. Note that $S_1$ represents
the initial endpoint of $\tilde{e}$. Let $\omega$ be the covector such
that $\omega.S_0 = \omega.S_1 = \omega.S_2 = 1$. Under
our assumption of parabolic holonomy, we may write
\[
\omega = \begin{pmatrix} e_{0 1} \cdot e_{1 0} \cdot e_{1 2} &
                         e_{1 2} \cdot e_{2 1} \cdot e_{2 0} &
                         e_{2 0} \cdot e_{0 2} \cdot e_{0 1} \end{pmatrix}.
\]
That is, the plane $P$ through $S_0,S_1,S_2$ is given as $P = \{v : \omega.v = 1 \}$.

Then $p \in {\mathcal{C}}(\Delta_\ast)$ if and only if for every $\gamma\in \Gamma$ we have
$\omega.(\gamma.S_0) \geq 1$, i.e.\thinspace when every element
of the orbit of $S_0$ does not lie on the same side of $P$ as the origin. It was shown in 
\cite{WoTi} that it suffices to show this locally, thus turning it into a finite problem.
 For our purposes, this means that
$p \in {\mathcal{C}}(\Delta_\ast)$ is equivalent to showing that for all $v \in \{r.S_0, g.S_1, b.S_2\}$,
$\omega.v \geq 1$. By construction, $r.S_0 = g^{-1}.S_1$.
Therefore call the quantity $\omega.(r.S_0)$ \emph{yellow bending};
we denote yellow bending by $YB$.\footnote{We wish to emphasise that we make the natural choice of using a two letter acronym here.} 
We call the condition $YB \geq 1$ \emph{yellow consistency}.
In the case of yellow consistency, the convex hull is non-concave along the associated edge.
We call the condition $YB = 1$ \emph{yellow flatness}.
We make similar definitions for cyan and magenta, with 
$CB$ and $MB$ being their associated bendings. 

Note that if one deletes $\phi(\tilde{e})$
from $\Delta_\ast$ to get an ideal quadrilation
$\Delta'$ of $S$, then for all points $p$ in moduli
space, $YB(p) = 1$ is equivalent to
$p \in \mathcal{C}(\Delta')$. Likewise for the other
edges.

Consistency occurs when all bendings
are greater than or equal to one.
Now, the bendings are all rational
functions, so consistency is a semi-algebraic
condition. To show that the set of
canonical structures is a cell, we will
show the following.

\begin{lemma}\label{lem:flatdisc}
The semi-algebraic set determined by the cyan
flatness condition is a smooth, properly embedded cell of codimension 1
in $\Teich_+(S_{1,1})$.
\end{lemma}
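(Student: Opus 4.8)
The plan is to realise the cyan flatness locus as the graph of a single smooth function over $\R^5$; this delivers in one stroke smoothness, proper embeddedness, codimension one, and the cell property, and simultaneously rules out extra components. As a first step I would pass to logarithmic coordinates. Writing each positive parameter as an exponential turns the two multiplicative constraints $t_{012}t_{210}=1$ and $e_{01}e_{10}e_{02}e_{20}e_{12}e_{21}=1$ of Lemma~\ref{lem:parhol} into two linear equations, so that $\Teich_+(S_{1,1})$ is identified with a $6$--dimensional linear subspace, hence with $\R^6$. On this subspace I would choose linear coordinates in which one distinguished direction $x$ corresponds to a suitable edge parameter along the cyan edge, and let $y\in\R^5$ denote the remaining coordinates.

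Next I would make the cyan bending $CB$ explicit. By the cyclic symmetry relating the yellow, cyan and magenta edges, $CB$ is obtained from the formula for $YB=\omega.(r.S_0)$ by the appropriate relabelling, and is a rational function that is manifestly positive on $\R^8_+$. The cyan flatness condition $CB=1$ is thus equivalent, after clearing the positive denominator, to a single polynomial equation. The heart of the argument is to show that, with $y$ fixed, $CB$ is strictly monotonic in $x$. Concretely I would compute $\partial(\log CB)/\partial x=(\partial CB/\partial x)/CB$ and show that its sign is constant throughout the constraint manifold; since $CB>0$ this reduces to the sign of the single rational function $\partial CB/\partial x$.

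Granting monotonicity, existence of a solution for each $y$ follows from a limiting analysis: as $x$ runs to the two ends of its interval one checks that $CB$ sweeps out an open interval containing $1$ (for instance $CB\to 0$ at one end and $CB\to\infty$ at the other). Monotonicity then gives, for every $y\in\R^5$, a unique $x=h(y)$ with $CB=1$, and the implicit function theorem makes $h$ smooth because $\partial CB/\partial x\neq 0$. The cyan flatness locus is therefore exactly the graph $\{(h(y),y):y\in\R^5\}$, which is diffeomorphic to $\R^5$ and automatically properly embedded and closed in $\R^6$; this is precisely a smooth, properly embedded cell of codimension one.

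The main obstacle is the sign verification in the monotonicity step. Because $CB$ is a complicated rational function, direct inspection is hopeless; instead, after clearing denominators the numerator of $\partial CB/\partial x$ is a polynomial in the positive parameters, and its strict positivity on $\R^8_+$ can be certified by algorithmic real algebraic geometry -- for example by exhibiting it (after an appropriate substitution) as a polynomial with positive coefficients, or by producing a Positivstellensatz certificate. The accompanying limiting analysis, by contrast, is routine once the explicit rational form of $CB$ is in hand.
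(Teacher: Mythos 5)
Your overall strategy --- realising the cyan flat locus as the graph of a smooth function of five of the six coordinates --- fails, and it fails for a reason that is at the heart of the paper's own proof. After imposing the parabolic conditions ($t_{210}=1/t_{012}$ and $e_{02}=1/(e_{01}e_{10}e_{12}e_{21}e_{20})$), the equation $CB=1$ clears denominators to
\[
e_{10}\,e_{12}^2\,t_{012}\,\bot_c\, e_{20} \;=\; e_{21}\,\top_c ,
\]
where
\[
\top_c = e_{01}e_{10}^2e_{12}^2e_{21}t_{012}-e_{12}^3-1,
\qquad
\bot_c = e_{21}^3 t_{012}+t_{012}-e_{01}^2e_{10}e_{12}e_{21}^2
\]
depend only on the remaining five coordinates $(t_{012},e_{01},e_{10},e_{12},e_{21})$. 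So for fixed values of those five coordinates, $CB-1$ is, up to a positive factor, linear in $e_{20}$ with slope proportional to $\bot_c$. Consequently: (i) the sign of $\partial CB/\partial e_{20}$ is the sign of $\bot_c$, which changes sign on $\Teich_+(S_{1,1})$, so no uniform monotonicity statement holds and no positivity certificate for the derivative can exist; (ii) over the region where $\top_c$ and $\bot_c$ have opposite signs the fibre contains no solution at all --- $CB$ does not sweep out an interval containing $1$ --- so the projection of the flat set is a \emph{proper} subset of the base and your function $h$ is undefined there; (iii) over the $3$--dimensional set $\{\top_c=\bot_c=0\}$ every $e_{20}>0$ satisfies $CB=1$, so the flat set contains entire vertical lines and uniqueness fails as well (and the implicit function theorem hypothesis $\partial CB/\partial e_{20}\neq 0$ fails exactly there). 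The same structure appears if you fibre by $e_{02}$, the other cyan edge parameter, since under the holonomy constraint the two are reciprocal up to a factor in the remaining variables.

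This is not a technical wrinkle but the actual shape of the set: the paper's proof shows the flat locus is the union of two graph pieces (each a $5$--disc, lying over the open regions $\{\top_c>0,\,\bot_c>0\}$ and $\{\top_c<0,\,\bot_c<0\}$) together with a product piece $\{\top_c=\bot_c=0\}\times\R_+$ (a $4$--disc, since $\{\top_c=\bot_c=0\}$ is a $3$--disc), glued along their frontiers, and then argues that this union is again a $5$--disc. If you wanted to salvage a graph-type argument you would need a fibration direction transverse to the vertical lines in the middle piece, and no natural candidate presents itself; the case analysis on the signs of $\top_c$ and $\bot_c$ appears unavoidable.
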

\begin{lemma}\label{lem:flatdisjoint}
The cyan, yellow and magenta flatness conditions are pairwise disjoint.
\end{lemma}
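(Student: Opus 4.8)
The plan is to exploit the order-three symmetry of the labelled triangulation together with the combinatorial fact that $S_{1,1}$ admits no ideal cell decomposition with a single edge. First I would record the symmetry: cyclically relabelling the three deck generators $r \to g \to b \to r$ is compatible with the relator $b\cdot g\cdot r$ and, as is already visible in the formul\ae\ for the vertices $U_0, U_1, U_2$ and the covectors $u_0, u_1, u_2$, it induces an automorphism of the parameter space that cyclically permutes the three edges of $\Delta_\ast$ and hence the three bendings $CB, YB, MB$. Consequently the three pairwise intersections $\{CB=1\}\cap\{YB=1\}$, $\{YB=1\}\cap\{MB=1\}$ and $\{MB=1\}\cap\{CB=1\}$ all lie in a single orbit, so it suffices to prove that one of them, say the cyan--yellow pair, is empty.

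For the cyan--yellow pair I would argue combinatorially. By the equivalence recorded in \S\ref{sec:canonicity} (flatness of an edge is equivalent to the point lying in the cell of the quadrilation obtained by deleting that edge), a point $p$ with $CB(p)=1$ satisfies $\Delta_p \subseteq \Delta_\ast \setminus \{\text{cyan}\} = \{\text{yellow},\text{magenta}\}$, while a point with $YB(p)=1$ satisfies $\Delta_p \subseteq \Delta_\ast \setminus \{\text{yellow}\} = \{\text{cyan},\text{magenta}\}$. If both held at the same $p$, then $\Delta_p \subseteq \{\text{yellow},\text{magenta}\}\cap\{\text{cyan},\text{magenta}\} = \{\text{magenta}\}$. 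But $\Delta_p$ is, by the Cooper--Long construction, a genuine ideal cell decomposition of $S_{1,1}$, and every such decomposition has at least two edges: with a single ideal edge the complement is an annulus rather than a disc. Hence no such $p$ exists, the cyan--yellow intersection is empty, and the remaining two pairs follow by the symmetry of the first paragraph.

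The step to watch is the input equivalence ``the $X$-bending equals $1$ $\iff$ the $X$-edge is absent from $\Delta_p$,'' which is where the geometry of the convex hull enters and which I would want to pin down independently of the (not-yet-established) global cell structure. The route matching the paper's computational methods is to convert the claim into algebra: using the holonomy of \S\ref{sec:periphery}, write $CB$ and $YB$ as explicit rational functions on the semi-algebraic set $\{t_{012}t_{210}=1,\ e_{01}e_{10}e_{02}e_{20}e_{12}e_{21}=1\}\subset\R^8_+$, and then certify that the system $CB=1,\ YB=1$ admits no solution with all coordinates positive. I expect the main obstacle to be producing this infeasibility certificate, that is, exhibiting a Positivstellensatz-type witness that the two polynomial equations together with positivity of the coordinates are inconsistent, using the same real-algebraic-geometry tools invoked for Lemma~\ref{lem:flatdisc}. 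Granting either the geometric equivalence or this certificate, pairwise disjointness follows.
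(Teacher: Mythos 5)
Your primary route is correct within the paper's stated framework, but it is genuinely different from the paper's own proof. The paper argues computationally: it reduces (implicitly, by the cyclic symmetry that you make explicit) to the cyan--yellow pair, writes down $\top_y,\bot_y$ in analogy with the $\top_c,\bot_c$ of Lemma~\ref{lem:flatdisc}, and then runs a cylindrical algebraic decomposition (\texttt{qepcad} via {\tt Sage}) on the intersection of the projections of the two flat sets to the $t_{012},e_{01},e_{10},e_{12},e_{21}$-plane; the returned list of sample points, one from every cell of the intersection, is empty, hence the flat sets are disjoint. This is precisely the infeasibility certificate you anticipate in your final paragraph, so your ``fallback'' is in fact the paper's actual method. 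Your combinatorial argument---that $CB(p)=YB(p)=1$ would force $\Delta_p\subseteq\{\text{magenta}\}$, while no ideal cell decomposition of $S_{1,1}$ has fewer than two edges---is shorter, conceptual, and would generalise to other surfaces, where direct quantifier elimination quickly becomes infeasible.

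The trade-off is exactly at the step you flag. Your argument consumes the full strength of the equivalence noted in \S\ref{sec:canonicity}, namely $XB(p)=1\iff p\in\mathcal{C}(Q_X)$. The forward implication ($p\in\mathcal{C}(Q_X)\Rightarrow XB(p)=1$) is immediate, but the reverse implication hides a nontrivial assertion: that whenever the four lifted vertices adjacent across one edge are coplanar, the plane through them is genuinely a supporting plane of the convex hull---equivalently, that $XB=1$ forces the other two bendings to be at least $1$. The paper states this equivalence but never proves it, and its proof of the present lemma deliberately does not use it: the \texttt{qepcad} computation certifies disjointness of the semialgebraic sets $\{CB=1\}$ and $\{YB=1\}$ unconditionally, which is what the proof of Theorem~\ref{thm:main} actually consumes. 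Since the paper itself invokes the same equivalence elsewhere (identifying $\mathring{\mathcal{C}}(\Delta')$ with a flat set), your proof introduces no dependency the paper does not already carry; but as a self-contained argument it shifts the real work onto that unproved geometric claim, whereas the computational route discharges it outright.
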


Assuming the lemmata, we can now prove the main theorem:

\begin{proof}[Proof of Theorem \ref{thm:main}]\label{proof of main}
Using our modification of Fock and Goncharov's coordinates  
as described above, $\Teich_+(S_{1,1})$ is identified 
with a properly embedded 6--disc in the positive
orthant of $\R^8$. Since the action of the mapping 
class group of $S_{1,1}$ is transitive on the set of 
all ideal triangulations of $S_{1,1},$ as well as on the 
set of all ideal quadrilations of $S_{1,1},$ it suffices 
to show that
\begin{enumerate}
\item one of the sets $\mathring{\mathcal{C}}(\Delta),$ 
where $\Delta$ is an arbitrary but fixed ideal 
triangulation, is an ideal cell; and 
\item one of the sets $\mathring{\mathcal{C}}(\Delta'),$ 
where $\Delta'$ is an arbitrary but fixed ideal 
quadrilation, is an ideal cell.
\end{enumerate}
The latter is the contents of Lemma~\ref{lem:flatdisc} 
for $\Delta'$ the ideal quadrilation obtained from $\Delta_\ast$ by 
deleting the cyan edge. Hence we turn to the former.
Let $\Delta_0,$ $\Delta_1$ and $\Delta_2$ denote the 
three ideal quadrilations obtained by deleting one of 
the three ideal edges from $\Delta_\ast.$ Then the 
frontier of $\mathring{\mathcal{C}}(\Delta)$ is 
contained in 
$\mathring{\mathcal{C}}(\Delta_0)\cup \mathring{\mathcal{C}}(\Delta_1)\cup \mathring{\mathcal{C}}(\Delta_2).$

For each ideal quadrilation $\Delta_i$, it follows from 
Lemma~\ref{lem:flatdisc} that 
$\mathring{\mathcal{C}}(\Delta_i)$ 
is a smooth properly embedded 5-disc. Whence each 
$\mathring{\mathcal{C}}(\Delta_i)$ 
cuts $\Teich_+(S_{1,1})$ into two 6--discs. Now since 
any two $\mathring{\mathcal{C}}(\Delta_i)$ and 
$\mathring{\mathcal{C}}(\Delta_j)$ are disjoint by 
Lemma~\ref{lem:flatdisjoint} it follows that 
$\Teich_+(S_{1,1}) \setminus \bigcup_i \mathring{\mathcal{C}}(\Delta_i)$ 
consists of four open 6--discs, and that the one  6--disc with all three 5--discs in its boundary is
$\mathring{\mathcal{C}}(\Delta).$ 

The statement that $\mathring{\mathcal{C}}$  is a bijection now follows from the well-known fact that any two ideal triangulations of $S$ are related by a finite sequence of edge flips.
\end{proof}

\begin{proof}[Proof of Lemma \ref{lem:flatdisc}]
We can
solve $CB = 1$ for $e_{2 0}.$ (See \S\ref{app:listings}, Listing~\ref{list:maxima16}.) 
Let
\begin{align}\label{eqn:topc}
\top_c &= e_{0 1} \cdot e_{1 0}^2 \cdot e_{1 2}^2 \cdot e_{2 1} \cdot t_{0 1 2} 
         - e_{1 2}^3 - 1,\\
\label{eqn:botc}
\bot_c &= e_{2 1}^3 \cdot t_{0 1 2} + t_{0 1 2} 
         - e_{0 1}^2 \cdot e_{1 0} \cdot e_{1 2} \cdot e_{2 1}^2.
\end{align}
Then
\[
(CB = 1)\ \equiv\ (e_{1 0}\cdot e_{1 2}^2 \cdot t_{0 1 2} \cdot \bot_c \cdot e_{2 0}
                = e_{2 1} \cdot \top_c).
\]
So when we project the subset $\mathcal{C}$ cut
out by cyan flatness onto the $t_{0 1 2}, e_{0 1}, e_{1 0}, e_{1 2}, e_{2 1}$
plane, the image decomposes into three pieces:
\[
(\bot_c > 0 \land \top_c > 0) \lor 
(\bot_c = 0 \land \top_c = 0) \lor
(\bot_c < 0 \land \top_c < 0),
\]
with the fiber over every point of the
first and last pieces a single point
(since we get the graph of 
$e_{2 0} = e_{2 1} \cdot \top_c / (e_{1 0} \cdot e_{1 2}^2 \cdot t_{0 1 2} \cdot \bot_c)$
over these regions), and the fiber over a point in the
middle piece a whole $\R_+$.

We will show that the first and last
pieces are 5-discs, and that the middle
piece is a 3-disc. Then $CB = 1$ will
be the union of two 5-discs and a
4-disc (the product of the middle
3-disc with $\R_+$) in their boundaries; this union is
again a 5-disc.

Now, if $\sim$ is one of $<,>,=$, then
\[
(\top_c \sim 0) \equiv t_{0 1 2} 
                       \sim \frac{e_{1 2}^3 + 1}
                                 {e_{0 1} \cdot e_{1 0}^2 \cdot e_{1 2}^2 \cdot e_{2 1}},
\qquad \qquad
(\bot_c \sim 0) \equiv t_{0 1 2}
                       \sim \frac{e_{0 1}^2 \cdot e_{1 0} \cdot e_{1 2} \cdot e_{2 1}^2}
                                 {e_{2 1}^3 + 1}.
\]
Let $p = (e_{1 2}^3 + 1)/(e_{0 1} \cdot e_{1 0}^2 \cdot e_{1 2}^2 \cdot e_{2 1})$
and $q = e_{0 1}^2 \cdot e_{1 0} \cdot e_{1 2} \cdot e_{2 1}^2/(e_{2 1}^3 + 1)$.
Let $\uparrow$ and $\downarrow$ denote
 the maximum and minimum operators respectively.
The first piece is equivalent to $t_{0 1 2} > p \uparrow q$, which is
the region above the graph of a function (viz. $p \uparrow q$); this region
is a 5-disc.

Now, $p$ and $q$ are both positive functions, assuming
their arguments are positive. So the last piece is
the region between the graph of a positive function (viz. $p \downarrow q$) and the
$e_{0 1},e_{1 0},e_{1 2},e_{2 1}$-plane; this is again just a 5-disc.

For the middle piece, one sees that
$p = q$ is equivalent to
\[
e_{0 1}^3 = \frac{(e_{1 2}^3 + 1) \cdot (e_{2 1}^3 + 1)}
                 {e_{1 0}^3 \cdot e_{1 2}^3 \cdot e_{2 1}^3},
\]
which is the graph of a positive function
(the cube root of the right-hand side) over $\R^3_+,$ which is a 3-disc.
\end{proof}
\begin{proof}[Proof of Lemma \ref{lem:flatdisjoint}]
To show disjointness, it will suffice to show that
the projections of $\mathcal{C}(Q_c)$ and $\mathcal{C}(Q_y)$
to the $t_{0 1 2},e_{0 1},e_{1 0},e_{1 2},e_{2 1}$-plane
are disjoint, where $Q_e$ is the ideal quadrilation
obtained from $\Delta_\ast$ by forgetting $e$.
To that end, we repeat the computation from the proof of the previous lemma for 
yellow flatness (see \S\ref{app:listings}, Listing~\ref{list:maxima17}).
Based on this, define
\begin{align*}
\top_y &= e_{0 1}\cdot e_{1 0}^3 \cdot e_{1 2}^2 \cdot e_{2 1} \cdot t_{0 1 2}
        +e_{0 1}\cdot e_{1 2}^2 \cdot e_{2 1} \cdot t_{0 1 2} - e_{1 0},\\
\bot_y &= e_{0 1}\cdot t_{0 1 2} - e_{0 1}^3 \cdot e_{1 0} \cdot e_{1 2}\cdot e_{2 1}^2
        -e_{1 0}\cdot e_{1 2} \cdot e_{2 1}^2.
\end{align*}
The projection of the cyan flat-set 
to this plane decomposes into
the pieces shown earlier, and
the projection of the yellow flat-set
decomposes likewise.

Algorithms for cylindrical algebraic
decomposition can return a list
containing a point from every cell 
of this decomposition. We may run
such an algorithm in {\tt Sage}\rm~\cite{sage} on the
intersection of the cyan and yellow
flat-set projections (see \S\ref{app:listings}, Listings~\ref{list:SAGE} and \ref{list:SAGEout}).
The computation used \texttt{qepcad}; and the output is
a list of a point from every cell in the intersection
of the projections to the $t_{0 1 2},e_{0 1},e_{1 0},e_{1 2},e_{2 1}$-plane
of the cyan and yellow flat-sets.
But this list is empty; therefore, their intersection is empty.
\end{proof}


\section{The thrice-punctured sphere}
\label{sec:S03}

An ideal triangulation of the thrice-punctured sphere also consists of three properly embedded arcs, and hence divides the sphere into two ideal triangles. So there are six edge invariants and two triangle invariants. However, as there are three cusps, there are three holonomy conditions to ensure that the peripheral elements are parabolic. Using the same set-up as in Figure~\ref{fig:ourparams}, but taking into account that each of the three indicated deck transformations now fixes the respective vertex of the triangle, a direct computation yields an identification of $\Teich_+(S_{0,3})$ with the set
\[
 \{ (t_{012}, t_{210}, e_{01}, e_{10}, e_{02}, e_{20}, e_{12}, e_{21}) \in \R^8_+ \mid t_{012}t_{210}=1, \ e_{01}= \frac{1}{e_{10}}= e_{12}=\frac{1}{e_{21}} =  e_{20} = \frac{1}{e_{21}} \},
\]
showing that $\Teich_+(S_{0,3})$ is 2--dimensional as proven by Marquis~\cite{Marq1}. We will simply write $(t_{012}, e_{01}) \in \Teich_+(S_{0,3}).$ The result of the convex hull construction of Cooper and Long~\cite{CL} now depends, for each point in $\Teich_+(S_{0,3}),$ on the lengths of the light-cone representatives for the three cusps, up to scaling all of them by the same factor. Whence there is an ideal cell decomposition of $S_{0,3}$ associated to each point in the \emph{decorated} moduli space $\widetilde{\Teich}_+(S_{0,3}),$ and the latter can be identified with the positive orthant in $\R^5.$

\begin{figure}[h]
	\centering	
 				\includegraphics[width=5.5cm]{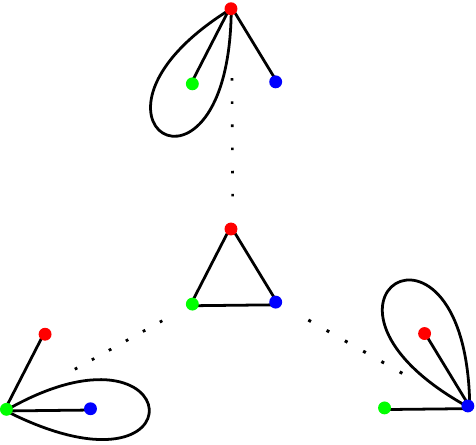}
		\caption{Flip graph of the thrice-punctured sphere}
	\label{fig:arcpod}
\end{figure}

Using Alexander's trick, it is an elementary exercise 
to determine that there are exactly four ideal 
triangulations and three ideal quadrilations of 
$S_{0,3}$. The \emph{flip graph} is the tripod shown 
in Figure~\ref{fig:arcpod}, where the quadrilations are 
obtained as intersections of the two pictures at the 
ends of each dotted arc. Let $\Delta_\ast$ be the ideal 
triangulation with the property that there is an arc 
between any two punctures. The punctures are labelled 
by $0,$ $1$ and $2,$ and $\Delta_i$ is the triangulation 
obtained from $\Delta\ast$ by performing an edge flip on 
the edge not meeting $i \in \{0, 1, 2\}.$ Moreover, the 
mapping class group of $S_{0,3}$ is naturally isomorphic 
with the group of all permutations of the three cusps.

\begin{theorem}\label{thm:main2}
The set
\[
\{ \mathring{\mathcal{C}}(\Delta) \mid \Delta \text{ is an ideal cell decomposition of } S_{0,3}\}
\]
is an ideal cell decomposition of $\widetilde{\Teich}_+(S_{0,3})$ 
that is invariant under the action of the mapping class group. Moreover, $\mathring{\mathcal{C}}$  is a natural bijection between the cells and the ideal cell decompositions of $S_{0,3}.$
\end{theorem}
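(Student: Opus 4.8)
The plan is to follow the proof of Theorem~\ref{thm:main} as closely as possible; the two genuinely new features are that the decoration of the three cusps must now be carried through the holonomy and bending computations, and that the mapping class group---the group of all permutations of the three cusps---no longer acts transitively on ideal triangulations. First I would fix coordinates. By the identification already established, $\widetilde{\Teich}_+(S_{0,3})$ is the positive orthant of $\R^5$, hence an open $5$--cell: two coordinates record the point $(t_{012},e_{01})\in\Teich_+(S_{0,3})$, and the remaining three record the lengths of the light-cone representatives at the three cusps. Whereas the single cusp of $S_{1,1}$ contributed one light-cone orbit that determined every vertex, here the three cusps carry \emph{independent} representatives whose three lengths constitute the decoration. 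Since the Cooper--Long decomposition $\Delta_p$ depends on these lengths only up to scaling all of them at once, each $\mathring{\mathcal{C}}(\Delta)$ is invariant under the free $\R_+$--action that rescales the three lengths simultaneously; the scaling direction therefore splits off as a free factor, so a subset is an open $k$--cell exactly when its slice at fixed total scale is an open $(k-1)$--cell, and every dimension count may be carried out in the $4$--dimensional slice and then multiplied by $\R_+$.

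Next I would compute the bendings. Exactly as in \S\ref{sec:periphery} and \S\ref{sec:canonicity}, but using the $S_{0,3}$ holonomy of \S\ref{sec:S03} (in which each of the three indicated deck transformations fixes its vertex, cf.\ Figure~\ref{fig:ourparams}) and keeping the three decorations as free parameters, one obtains cyan, yellow and magenta bendings $CB,YB,MB$ as explicit rational functions on $\R^5_+$, each homogeneous of degree zero in the decoration. Their flatness sets $\{CB=1\}$, $\{YB=1\}$, $\{MB=1\}$ are precisely the sets $\mathcal{C}(Q_0),\mathcal{C}(Q_1),\mathcal{C}(Q_2)$ attached to the three quadrilations of Figure~\ref{fig:arcpod}, where $Q_i$ lies on the tripod edge joining $\Delta_\ast$ to $\Delta_i$. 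The technical core is then the pair of statements analogous to Lemmas~\ref{lem:flatdisc} and~\ref{lem:flatdisjoint}:
\begin{enumerate}
\item each flatness set is a smooth, properly embedded, codimension-one $4$--disc in $\widetilde{\Teich}_+(S_{0,3})$; and
\item the three flatness sets are pairwise disjoint.
\end{enumerate}
For (1) I would solve the relevant flatness equation for a single edge parameter and decompose the solution set over the complementary coordinates just as in the proof of Lemma~\ref{lem:flatdisc}, splitting into two graph regions and a middle locus thickened by the scaling $\R_+$; the full permutation symmetry of the cusps means one such computation settles all three walls. For (2) I would project the flatness sets to the complementary coordinate hyperplane and feed their pairwise intersections to a cylindrical algebraic decomposition, as in the proof of Lemma~\ref{lem:flatdisjoint}, again invoking symmetry so that only one pair need be checked. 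Note that (2) is the exact statement that no point carries two flatness conditions at once, i.e.\ that $S_{0,3}$ admits no ideal cell decomposition coarser than a quadrilation, matching the tripod.

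With (1) and (2) in hand the conclusion is combinatorial and mirrors Theorem~\ref{thm:main}: three pairwise disjoint, properly embedded, codimension-one $4$--discs cut the $5$--cell $\widetilde{\Teich}_+(S_{0,3})$ into exactly four open $5$--cells, and the signs of $CB-1,YB-1,MB-1$ are constant on each, so each component carries a single triangulation as its canonical decomposition. Reading the frontiers off the tripod of Figure~\ref{fig:arcpod}---deleting any of the three edges of $\Delta_\ast$ yields a quadrilation, while deleting an edge of an outer triangulation $\Delta_i$ yields a quadrilation only for the single flip recorded by the tripod, the remaining two deletions not yielding ideal cell decompositions---shows that the region meeting all three walls is $\mathring{\mathcal{C}}(\Delta_\ast)$ and the three regions meeting a single wall are $\mathring{\mathcal{C}}(\Delta_0),\mathring{\mathcal{C}}(\Delta_1),\mathring{\mathcal{C}}(\Delta_2)$. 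Hence $\mathring{\mathcal{C}}$ is a bijection between the seven cells and the seven ideal cell decompositions of $S_{0,3}$, and invariance under the mapping class group follows from equivariance of the construction together with the fact that the cusp-permutation group permutes the three walls (and the three outer cells) transitively while fixing $\Delta_\ast$.

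I expect the main obstacle to be step (2). Carrying the three decoration parameters through the bendings enlarges the number of free variables relative to the once-punctured torus, so the real-algebraic verification that the three flatness hypersurfaces are pairwise disjoint---rather than merely plausibly so from the projectivised picture on the $2$--simplex of decorations---is the most delicate and computationally expensive point; it is here that the argument must genuinely be certified, for instance by confirming that the cylindrical algebraic decomposition of each pairwise intersection is empty, with the cusp-permutation symmetry and the scaling invariance serving as the main levers for keeping the computation tractable.
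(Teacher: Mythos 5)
Your proposal is correct, and its skeleton---three quadrilation walls that are properly embedded codimension-one cells, pairwise disjoint, cutting $\widetilde{\Teich}_+(S_{0,3})$ into four open $5$--cells matched to the four triangulations via the tripod, with mapping class group invariance coming from the permutation action on the cusps---is exactly the skeleton of the paper's proof. Where you differ is in execution: you transplant the computational machinery of Lemmas~\ref{lem:flatdisc} and~\ref{lem:flatdisjoint} (solve each flatness equation for an edge parameter, decompose into graph regions, certify disjointness by cylindrical algebraic decomposition), whereas the paper simply writes out the three convexity conditions in the coordinates $(t_{012},e_{01},\omega_0,\omega_1,\omega_2)$ and finds
\[
\omega_1 - \omega_2\, t_{012} + \omega_0\, t_{012}^2 \ \ge\ 0 \qquad \text{(and its two cyclic permutations)},
\]
i.e.\ conditions that are linear in the decorations, quadratic in $t_{012}$, and---crucially---independent of $e_{01}$, so the whole decomposition is a product of a decomposition of $\R^4$ with $\R$. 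This collapses both of your lemmas to elementary algebra: each wall is the graph of a linear function of one $\omega_i$ over an explicit open $4$--cell, and disjointness needs no \texttt{qepcad}, since if two of the conditions vanish (or are simultaneously nonpositive), substituting one into the other yields $\omega_1(1+t_{012}^3)\le 0$, impossible in the positive orthant. Two smaller points: your plan to ``solve the relevant flatness equation for a single edge parameter'' cannot be carried out literally, because the $S_{0,3}$ flatness equations do not involve the surviving edge parameter $e_{01}$ at all---one must solve for a decoration $\omega_i$ (or for $t_{012}$) instead; and the step you flag as the likely bottleneck (the real-algebraic disjointness certification) is precisely the step that evaporates, so the $S_{0,3}$ case turns out to be substantially easier than the $S_{1,1}$ case rather than comparably hard. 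Everything else---the scaling-invariance reduction of the decoration, the reading of frontiers off the tripod, the count of seven cells against seven ideal cell decompositions---matches the paper's argument.
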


\begin{proof}
Using the above set-up, let $(e_{01}, t_{012}) \in \Teich_+(S_{0,3}),$ and 
choose light-cone representatives $V_0 = \begin{pmatrix} \frac{1}{\omega_0} \\ 0 \\ 0 \end{pmatrix}$, 
$V_1 = \begin{pmatrix} 0 \\ \frac{1}{\omega_1} \\ 0 \end{pmatrix}$, 
$V_2 = \begin{pmatrix} 0 \\ 0 \\ \frac{1}{\omega_2} \end{pmatrix}.$ 
The corresponding point in $\widetilde{\Teich}_+(S_{0,3})$ is identified with 
$(t_{012}, e_{01}, \omega_0, \omega_1, \omega_2) \in \R^5_+.$

The three convexity conditions associated to the edges of $\Delta_\ast$ are equivalent to
\begin{align*}
\omega_1 - \omega_2\; t_{012} + \omega_0\; t^2_{012} &\ge 0,\\
\omega_2 - \omega_0\; t_{012} + \omega_1\; t^2_{012} &\ge 0,\\
\omega_0 - \omega_1\; t_{012} + \omega_2\; t^2_{012} &\ge 0.
\end{align*}
It is interesting to note that these are independent of $e_{01},$ 
whence the decomposition is the product of a decomposition of 
$\R^4$ with $\R.$. These define $\mathcal{C} (\Delta_\ast).$
Using these conditions, it is straight forward to check that 
${\mathcal{C}}(\Delta_\ast)$ is an ideal cell. Moreover, 
\[
\mathring{\mathcal{C}}(\Delta_0) = \{ (t_{012}, e_{01}, \omega_0, \omega_1, \omega_2) \in \R^5_+ \mid
\omega_2 - \omega_0\; t_{012} + \omega_1\; t^2_{012}<0\},
\]
and likewise for $\Delta_1$ and $\Delta_2$ by cyclically permuting the appropriate subscripts. This divides $\R^5_+$ into four open 5--balls along three properly embedded open 4--balls, and the dual skeleton to this decomposition is naturally identified with the flip graph of $S_{0,3}.$ The invariance by the action of the mapping class group follows since it acts as the group of permutations on $\{0, 1, 2\}.$
\end{proof}


\section{Cloverleaf patches}
\label{sec:clovers}

Of course, we would now like to explore how the geometry of a marked strictly convex projective structure relates to its relative position within a cellular subsets of moduli space, and how the geometry varies as one moves around in moduli space or towards the ``boundary," i.e.\thinspace as at least one coordinate becomes very small or very large.
Structures near the boundary might be difficult to
draw properly, given an arbitrary affine patch.
The image $\Omega$ of the developing map might go off to
infinity or become thin, yielding uninformative pictures.
We must take care, then, with our choice of affine patch.
One such choice is \emph{Benz\'ecri position}, 
discussed in \cite{Benz, CLT2}. Our investigations here
led us to discover another such choice,
which we call a \emph{cloverleaf patch}.

\label{def:cloverleaf}
Let $S$ be an oriented marked convex projective
surface with geodesic boundary. Let $\Delta_\ast$
be an ideal triangulation of $S$. Let $\tilde{t}$
and $\tilde{e}$ be an adjacent triangle and
edge in the lift $\widetilde{\Delta_\ast}$
of $\Delta_\ast$ through a universal cover $\phi$.

Let $\dev$ be the developing map satisfying our choices above, viz.
\begin{enumerate}
\item the standard
basis for $\R^3$ projects to
the image under $\dev$
of the vertices of $\tilde{t}$;
\item $(1,0,0)^t$ and $(0,1,0)^t$
project to $V_-$ and $V_+$, respectively; and
\item
the kernels of the covectors $(0,t_{012},1)$,
$(1,0,t_{012})$, and $(t_{012},1,0)$
project to lines tangent to the boundary
of the convex domain $\Omega$ that is
the image of $\dev$,
where $t_{012}$ is the parameter associated to $\tilde{t}$.
\end{enumerate}
Finally, let 
\[
\omega = \begin{pmatrix} e_{0 1} \cdot e_{1 0} \cdot e_{1 2} &
                         e_{1 2} \cdot e_{2 1} \cdot e_{2 0} &
                         e_{2 0} \cdot e_{0 2} \cdot e_{0 1} \end{pmatrix}
\]
and let $P$ be the affine patch given by $\omega.v = 1$.

Then $P$ is the \emph{cloverleaf patch} of the structure
on $S$ with respect to $\tilde{t}$ and $\tilde{e}$.

\begin{theorem}\label{thm:cloverleaf}
Let $\Omega'$ be the image of $\Omega$ under
the affine isomorphism $\alpha$ between a cloverleaf
patch and $\R^2 = \mathbb{C}$ that
sends the vertices of $\tilde{t}$ to
the cube roots of unity and the vertices
of $\tilde{e}$ to the primitive cube
roots of unity.

Then $\Omega'$ contains the triangle
whose vertices are the cube roots of
unity, and is contained in the union
of unit discs centered at the cube
roots of $-1.$
\end{theorem}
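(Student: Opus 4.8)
The plan is to treat the two inclusions separately: the containment of the triangle is a soft consequence of convexity, while the containment in the union of discs requires transporting the explicit developing data of \S\ref{sec:periphery} into cloverleaf coordinates and reducing to a finite semi-algebraic inequality. First I would dispose of the lower bound. The three vertices of $\tilde t$ lie on $\partial\Omega$, hence are carried by $\alpha$ to the three cube roots of unity, which therefore lie in $\overline{\Omega'}$. Since $\Omega'$ is convex it contains the convex hull of these three points, which is precisely the asserted triangle; its interior lies in $\Omega'$ and its vertices on $\partial\Omega'$. This step uses nothing beyond convexity.

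For the upper bound I would first fix coordinates. Using Theorem~\ref{thm:proj} together with the vertices $U_0,U_1,U_2$, the covectors $u_0,u_1,u_2$, and the holonomy generators $r,g,b$ computed in \S\ref{sec:periphery}, every vertex of the developed triangulation and its associated tangent line becomes an explicit expression in the Fock--Goncharov parameters. Composing with $\alpha$ — the affine map determined by sending the normalised representatives $S_0,S_1,S_2$ of \S\ref{sec:canonicity} to $\zeta,\zeta^2,1$ (where $\zeta=e^{2\pi i/3}$), so that the edge vertices $S_0,S_1$ land on the primitive cube roots of unity — turns each such vertex and tangent line into a rational function of the parameters valued in $\mathbb C$. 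By strict convexity, $\Omega'$ is the intersection of the supporting half-planes at its boundary points, so it suffices to bound $\Omega'$ by the supporting lines at the vertices of the developed triangulation, all of which are now available in closed form.

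Because the formulas for the three edges of $\tilde t$ are obtained from one another by cyclically permuting the labels $0,1,2$ (the symmetry $\sigma$ of~\eqref{eqn:sigma}), it suffices to verify the disc bound for a single edge, for all admissible parameter values. Concretely I would show that the arc of $\partial\Omega'$ running between $\alpha(S_0)=\zeta$ and $\alpha(S_1)=\zeta^2$ lies in the closed unit disc centred at the cube root $-1$ of $-1$. Here the clean criterion is the inscribed-angle theorem: the chord $\zeta\zeta^2$ subtends a central angle $2\pi/3$ at $-1$, so a point on the far (bulging) side of the chord lies in that disc precisely when it subtends an angle at least $\pi/3$ on the segment $[\zeta,\zeta^2]$. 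Since the minimum of this angle over the convex bulge is attained on the boundary arc, the inclusion is equivalent to a lower bound of $\pi/3$ on the angle subtended at every point of that arc — a semi-algebraic condition on the explicit coordinates above. I expect to verify it by feeding the rational formulas for the relevant supporting lines into a cylindrical algebraic decomposition, exactly as the flatness lemmas of \S\ref{sec:canonicity} are verified.

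The hard part is precisely this last inclusion, and the obstruction is that the cloverleaf is \emph{not} convex: its three notches sit exactly at the cube roots of unity, so containment cannot be read off from convexity of $\Omega'$ together with the containment of finitely many vertices, and one must control the entire bulging arc. A direct computation moreover shows that the supporting line of $\partial\Omega'$ at $\zeta$ is genuinely parameter-dependent and is \emph{not} tangent to the bounding circle; consequently the naive bound by the two tangent half-planes at $\zeta$ and $\zeta^2$ fails, since their common wedge protrudes through the disc near its tip. The decisive extra input is strict convexity together with the $C^1$ behaviour of $\partial\Omega'$ at the vertices (the domain is round): the boundary approaches each cube root of unity tangentially and must pinch so that it re-enters the two adjacent petals rather than escaping through the notch. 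Making this quantitative — that is, showing that finitely many supporting lines (those at the triangle vertices and at their immediate neighbours $U_0,U_1,U_2$), combined with the endpoint tangencies, already force the whole arc into the disc for every value of the parameters, so that one need not invoke the full holonomy orbit — is the main point requiring care.
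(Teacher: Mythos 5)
Your first inclusion is fine, and it is the same one-line observation the paper makes: $\Omega'$ contains the image of $\tilde{t}$, which $\alpha$ sends by definition to the triangle spanned by the cube roots of unity. The genuine gap is the second inclusion, which is where all the content of the theorem sits: you never prove it. You reduce it to the claim that the supporting half-planes at the three vertices and at $U_0,U_1,U_2$, ``combined with the endpoint tangencies,'' trap the bulging arc inside the disc for every admissible parameter value, and you then explicitly set that claim aside as ``the main point requiring care,'' to be certified by a cylindrical algebraic decomposition that is never set up, let alone run. A plan that ends by naming the remaining difficulty is not a proof; worse, nothing in your write-up shows that this particular finite family of half-planes suffices, so even the reduction to a semi-algebraic statement is unjustified.

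The paper settles this step with no computation at all, using a symmetry that your proposal first overlooks and then implicitly contradicts. By the normalisation of the developing map, the tangent lines to $\partial\Omega$ at the vertices of $\tilde{t}$ are the kernels of $v_0=(0,t_{012},1)$, $v_1=(1,0,t_{012})$, $v_2=(t_{012},1,0)$, and the paper asserts that these flags are cyclically permuted by a map $\sigma.\rho\in\SL(3,\R)$ which $\alpha$ conjugates into the rotation of $\R^2$ by $2\pi/3$. Granting that, the trilateral cut out by the three tangent lines is equilateral, and your own inscribed-angle criterion finishes the proof: writing $\zeta=e^{2\pi i/3}$, the tangent lines at $\zeta$ and $\zeta^2$ are then related by the rotation, so beyond the chord they meet at a point of the unit circle centred at $-1$ (angle $\pi/3$, plus a check of the degenerate positions); each flap of the trilateral is a triangle inscribed in its disc, and the central triangle is covered since each of $(0,1,\zeta)$, $(0,\zeta,\zeta^2)$, $(0,\zeta^2,1)$ is inscribed in one of the three circles. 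So the ``protruding wedge'' you invoke to dismiss this route is exactly what the symmetry forbids, and your fallback machinery is unnecessary. You should, however, reconcile your ``direct computation'' with the symmetry claim rather than leave the contradiction standing, because the tension is real and instructive: $\sigma$ permutes $v_0,v_1,v_2$ exactly, but it preserves the plane $\{\omega.v=1\}$ only when the three entries of $\omega$ coincide, and no map $\sigma.\rho$ with $\rho$ diagonal both preserves that plane and permutes the three kernels. Hence in the literal cloverleaf patch with generic parameters the tangent-line trilateral is \emph{not} equilateral and can even be unbounded (take $t_{012}=t_{210}=1$, $e_{01}=2$, $e_{10}=\tfrac12$, all other parameters $1$: the tangents at $\zeta$ and $\zeta^2$ become parallel). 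If that is the patch in which you computed, your objection is evidence of a defect in the paper's own symmetry step rather than a reason for your CAD detour; but either way your proposal does not prove the theorem, whereas in a patch on which the rotation genuinely acts affinely (such as $x+y+z=1$) the three-tangent-line argument is already complete.
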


\begin{figure}
\begin{center}
\scalebox{0.5}{\includegraphics{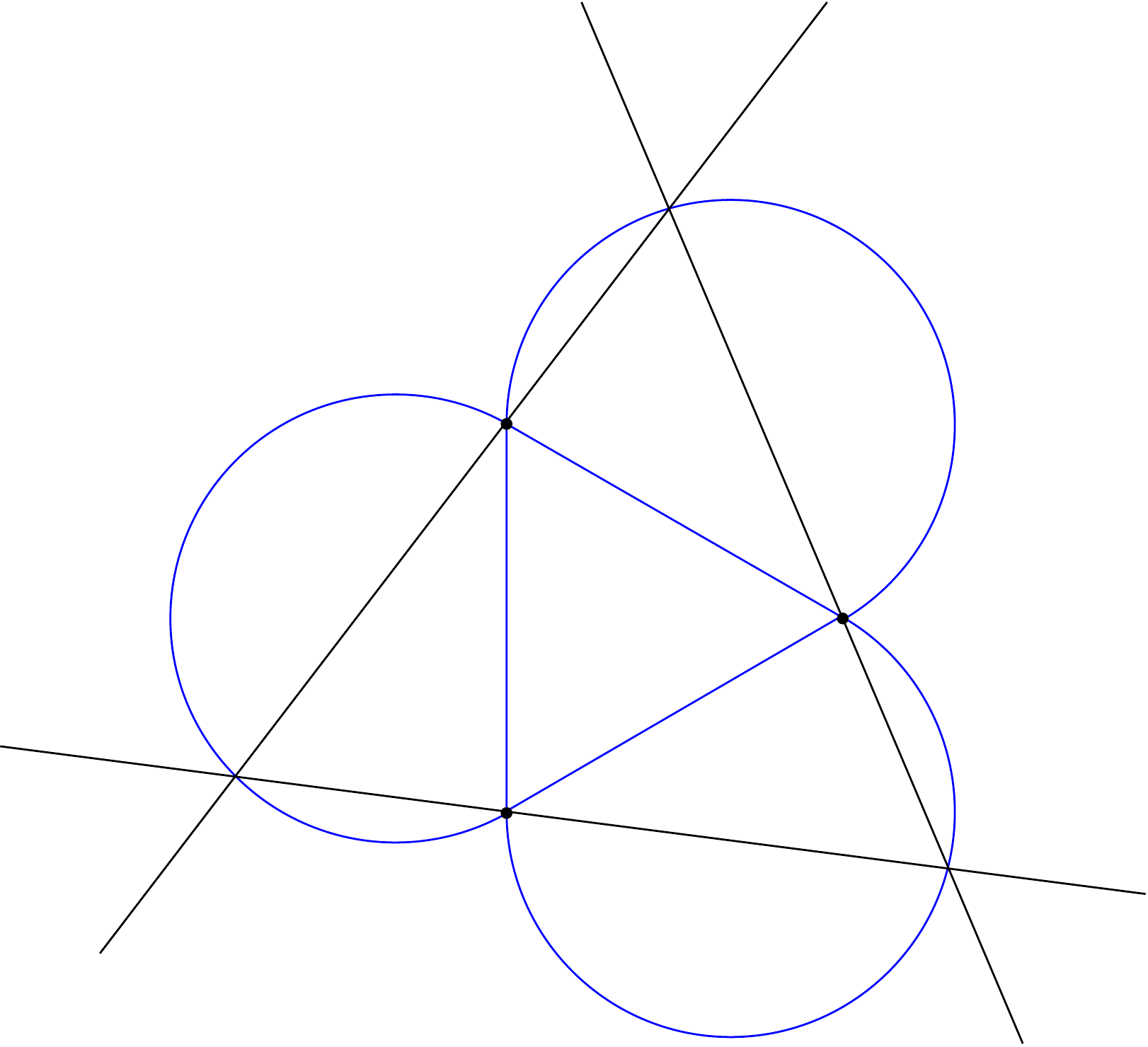}}
\end{center}
\caption{Three vertices and supporting hyperplanes in a cloverleaf patch.}\label{fig:cloverleaf}
\end{figure}

\begin{proof}
The domain $\Omega'$ by definition contains the
image of $\tilde{t}$, and the image
of $\tilde{t}$ under $\alpha$ is,
by definition, the triangle whose
vertices are the roots of unity. This
concludes the proof of the first claim.

Let
\[
\rho =
\begin{pmatrix}
\frac{e_{20} \cdot e_{02}}
     {e_{12} \cdot e_{10}} & 0 & 0\\
0 & \frac{e_{01} \cdot e_{10}}
         {e_{20} \cdot e_{21}} & 0\\
0 & 0 & \frac{e_{12} \cdot e_{21}}
             {e_{01} \cdot e_{02}}
\end{pmatrix},
\]
and let $\sigma$ be as in (\ref{eqn:sigma}).
Then $\sigma . \rho$ is an element of 
$\SL(3, \R)$ which permutes the
vertices of the image of $\tilde{t}$.
But it also permutes the covectors
$v_0 = \begin{pmatrix} 0 &  t_{012} & 1 \end{pmatrix}$, 
$v_1 = \begin{pmatrix} 1 & 0 &  t_{012} \end{pmatrix}$, and
$v_2 = \begin{pmatrix} t_{012} & 1 & 0\end{pmatrix}$.
We've chosen our developing map so that
the kernels of these covectors project to
tangent lines to $\partial \Omega$ at the
vertices of the image of $\tilde{t}$.
So $\Omega$ lies within
the trilateral 
$\tau = \{ V : \langle \forall i \in \{0,1,2\} : v_i.V > 0\rangle\}$.
This trilateral is symmetric under $\sigma . \rho$,
so its image is symmetric under $s = \alpha \circ \sigma . \rho \circ \alpha^{-1}$.
But $s$ is an order 3 affine automorphism of $\R^2$
permuting the image of the vertices of $\tilde{t}$, the
cube roots of unity. So $s$ is just $2\pi/3$ rotation
about the origin. Therefore the image of
$\tau$ is a trilateral with an order 3 rotational
symmetry. Hence it is an equilateral trilateral
circumscribed about the equilateral triangle
formed by the roots of unity.

Using elementary Euclidean geometry, it is
easy to see that this trilateral must
lie in the region described, the
union of the unit discs centered
at cube roots of $-1.$ (See Figure~\ref{fig:cloverleaf}.)
Since $\Omega'$ lies inside this trilateral,
it lies inside the region as well.
\end{proof}

\begin{figure}[h]
	\centering	
 		\includegraphics[width=3.4cm]{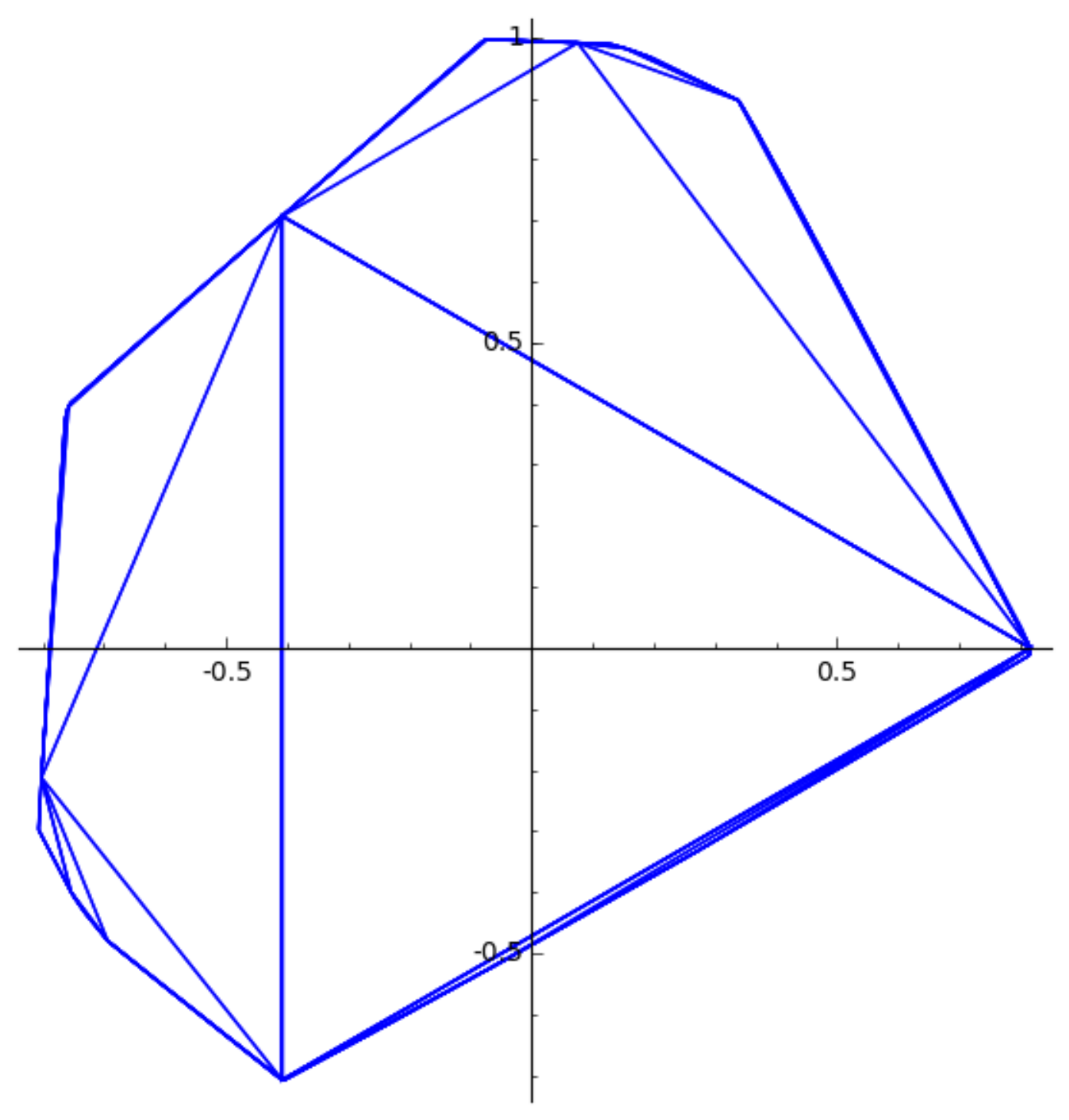} 
		\includegraphics[width=3.4cm]{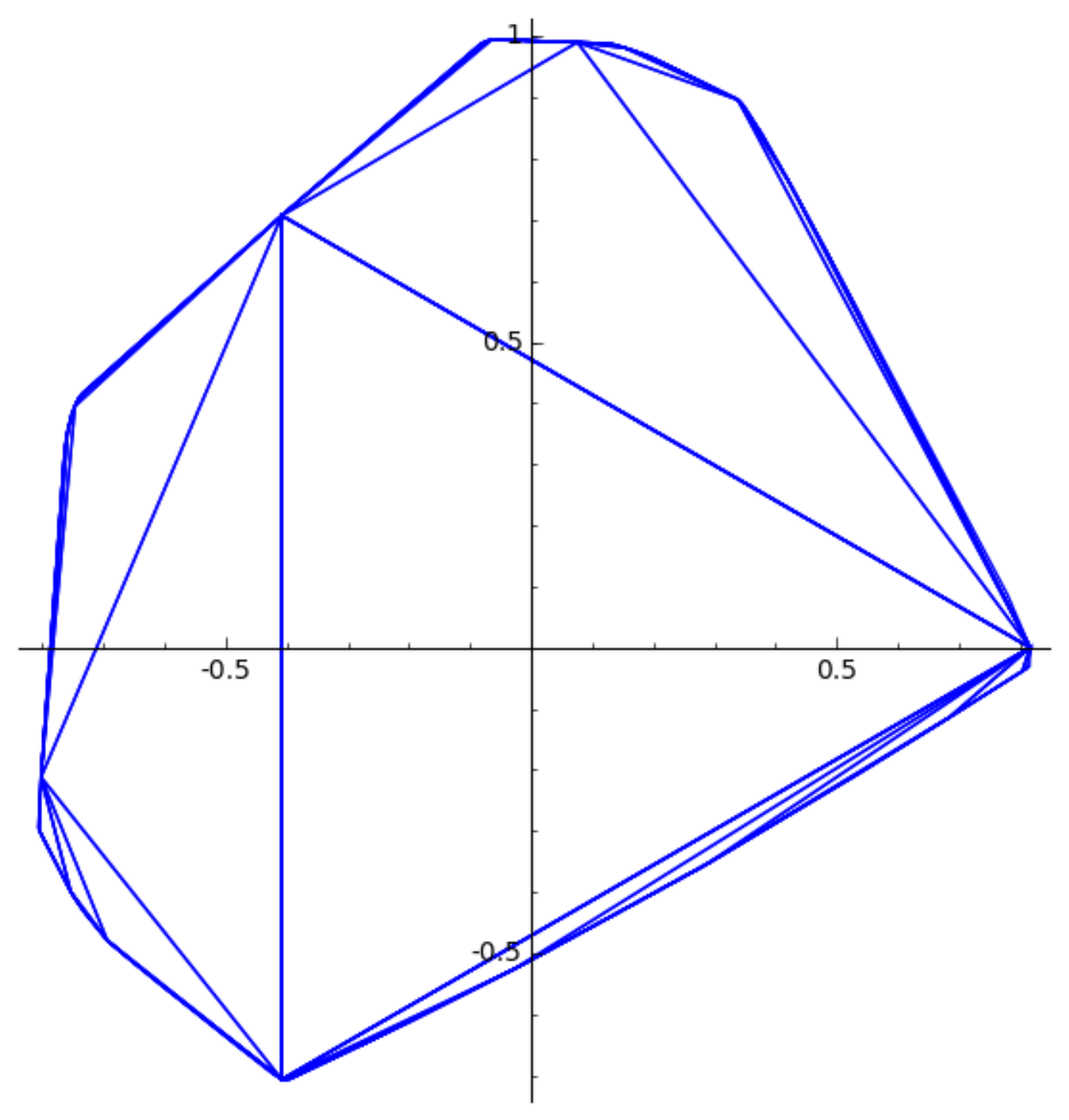} 
		\includegraphics[width=3.4cm]{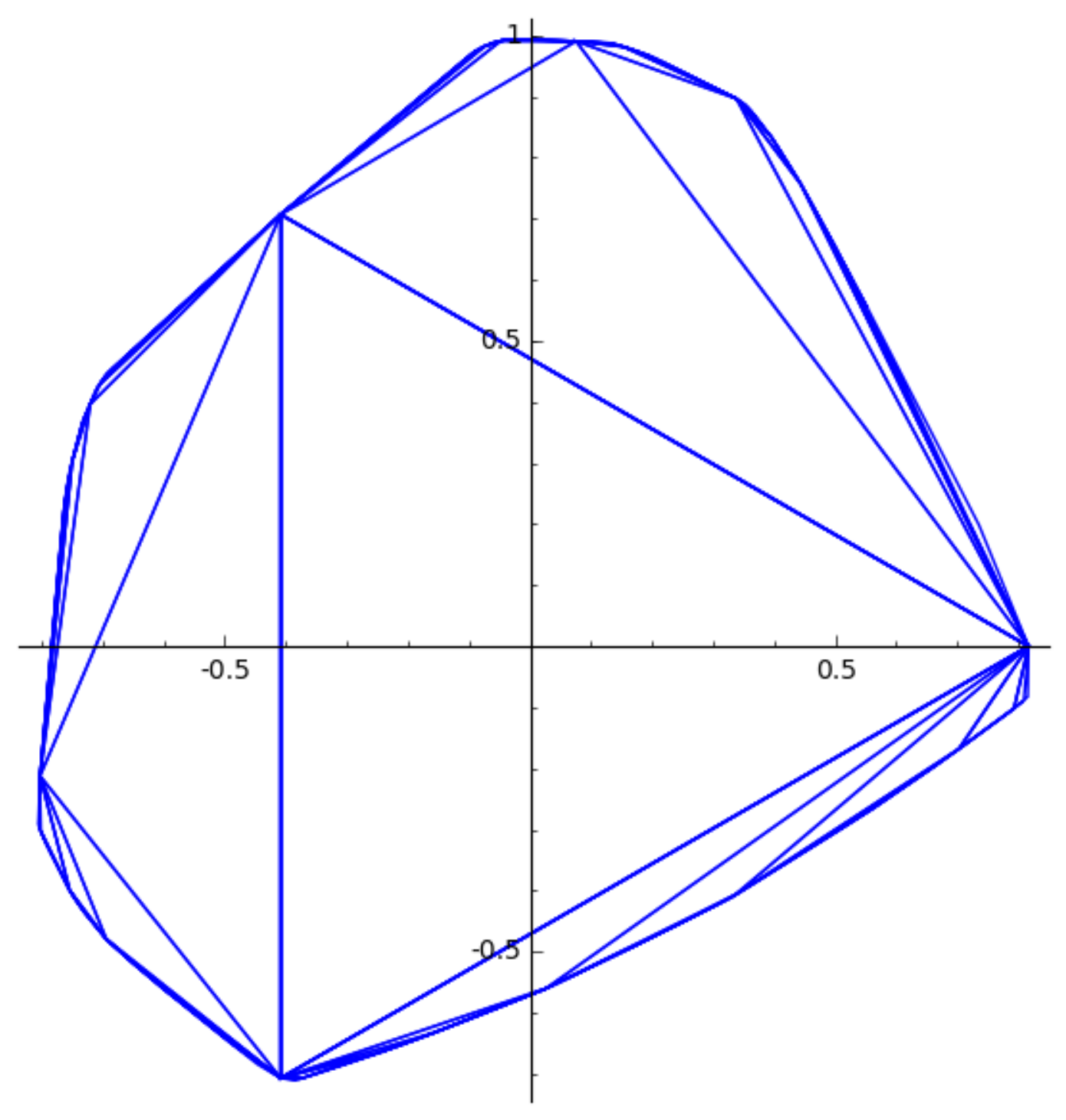} 
		\includegraphics[width=3.4cm]{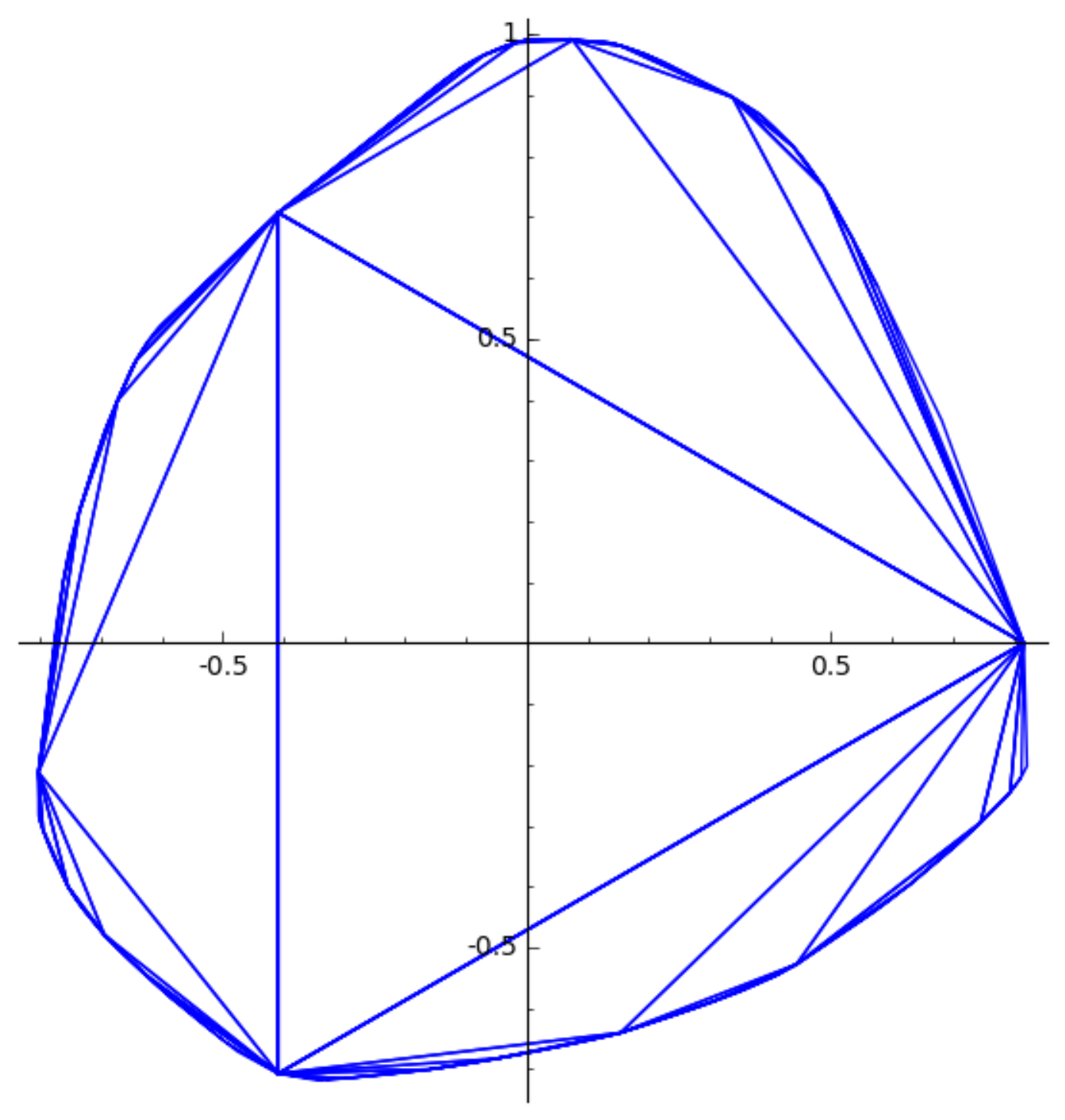}
 		\includegraphics[width=3.4cm]{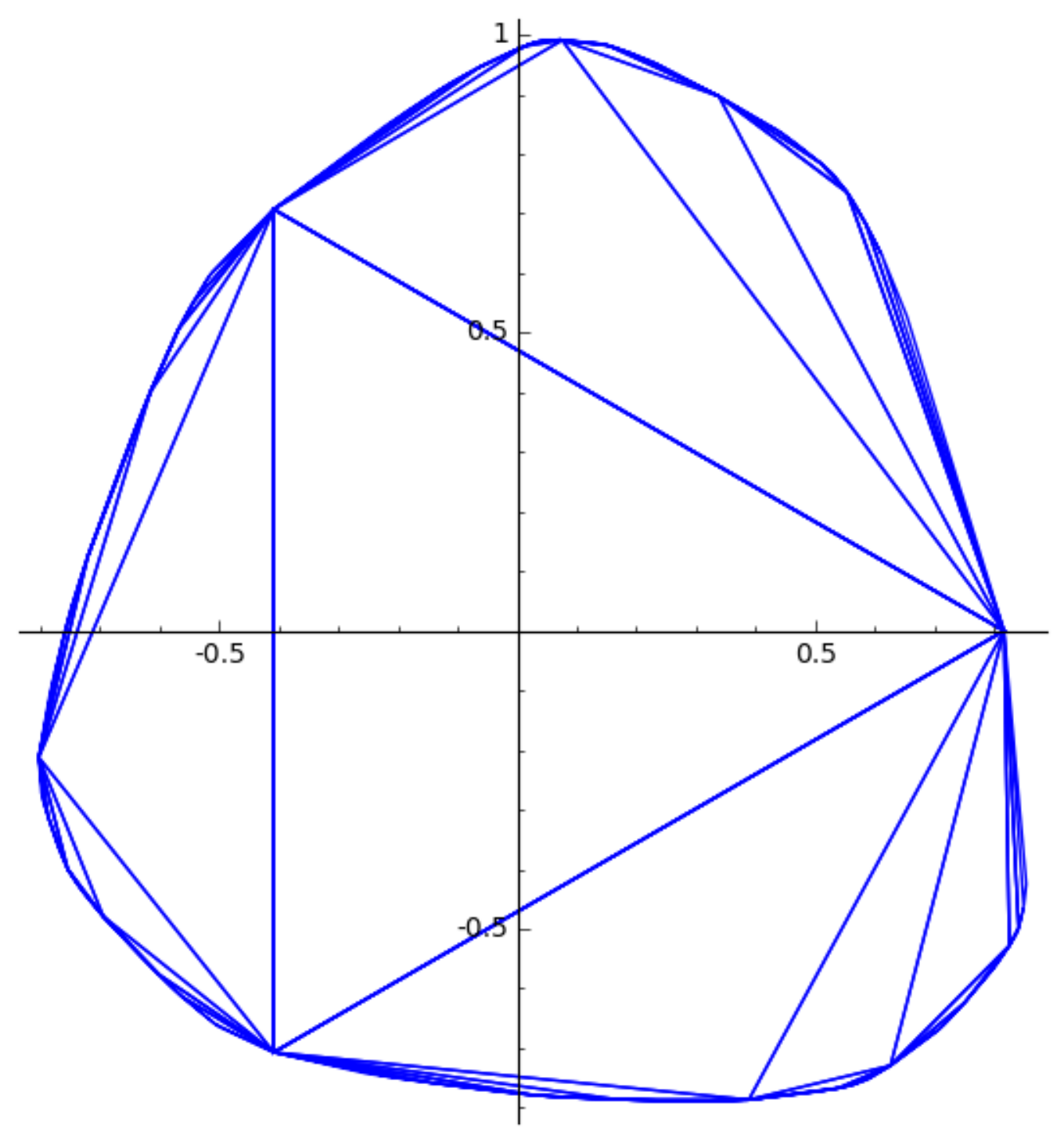} 
		\includegraphics[width=3.4cm]{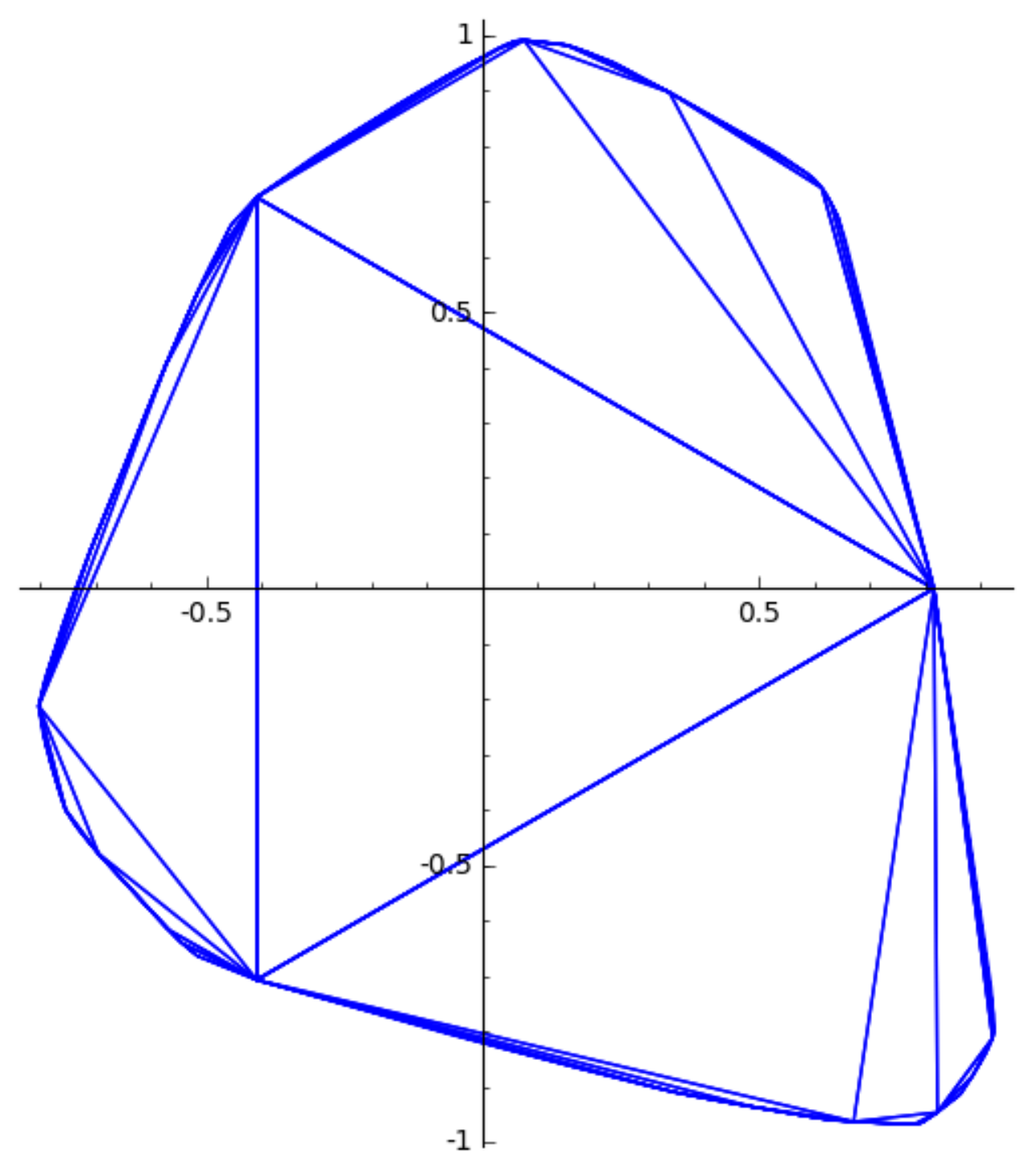} 
		\includegraphics[width=3.4cm]{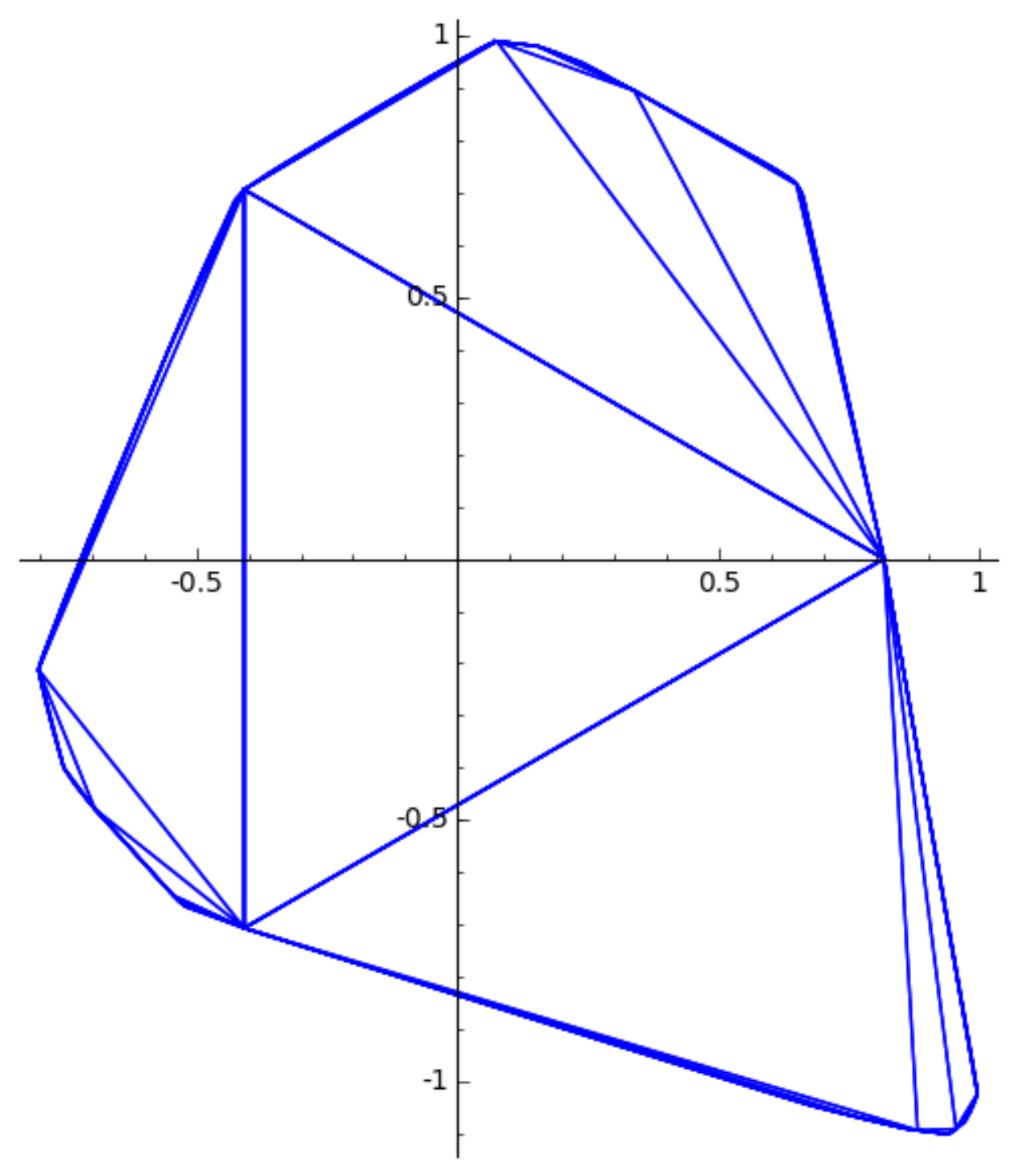} 
		\includegraphics[width=3.4cm]{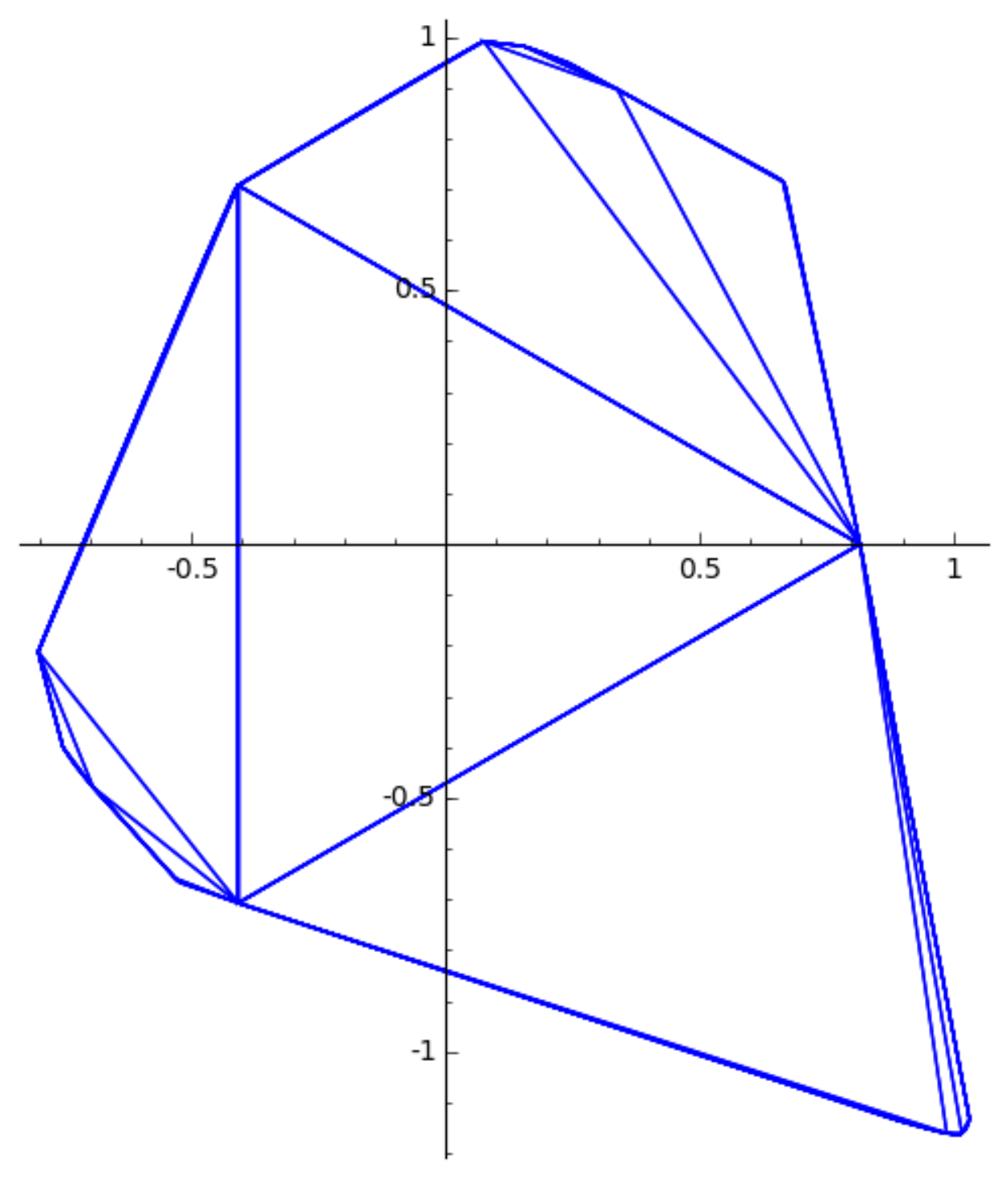}
		\caption{A degenerating sequence of projective structures in cloverleaf position all lying in the same cell of moduli space and with the first and last pictures close to the boundary at infinity. The parameters are
$(1/\sqrt[3]{2}, \sqrt[3]{2}, 1, \sqrt[3]{4}, 1, 1, 2^\mu, 2^{-\mu-2/3})$
for $\mu \in \{-2.5 + i/2 : 0 \leq i < 8\}$, and the domains appear to converge to polygons.
		}
	\label{fig:cloversequence}
\end{figure}

\begin{figure}[h]
	\centering	
 		\includegraphics[width=3.4cm]{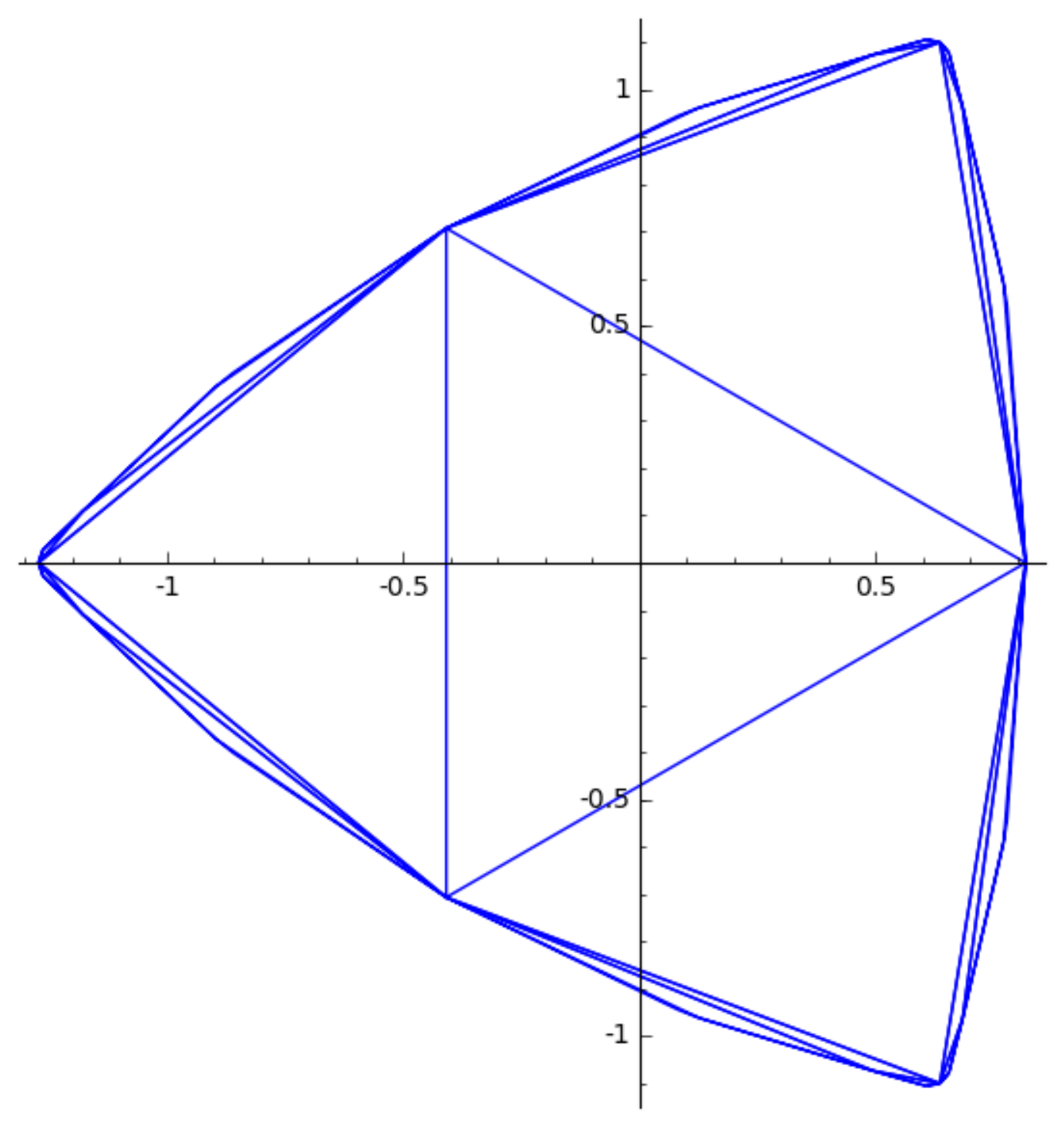} 
		\includegraphics[width=3.4cm]{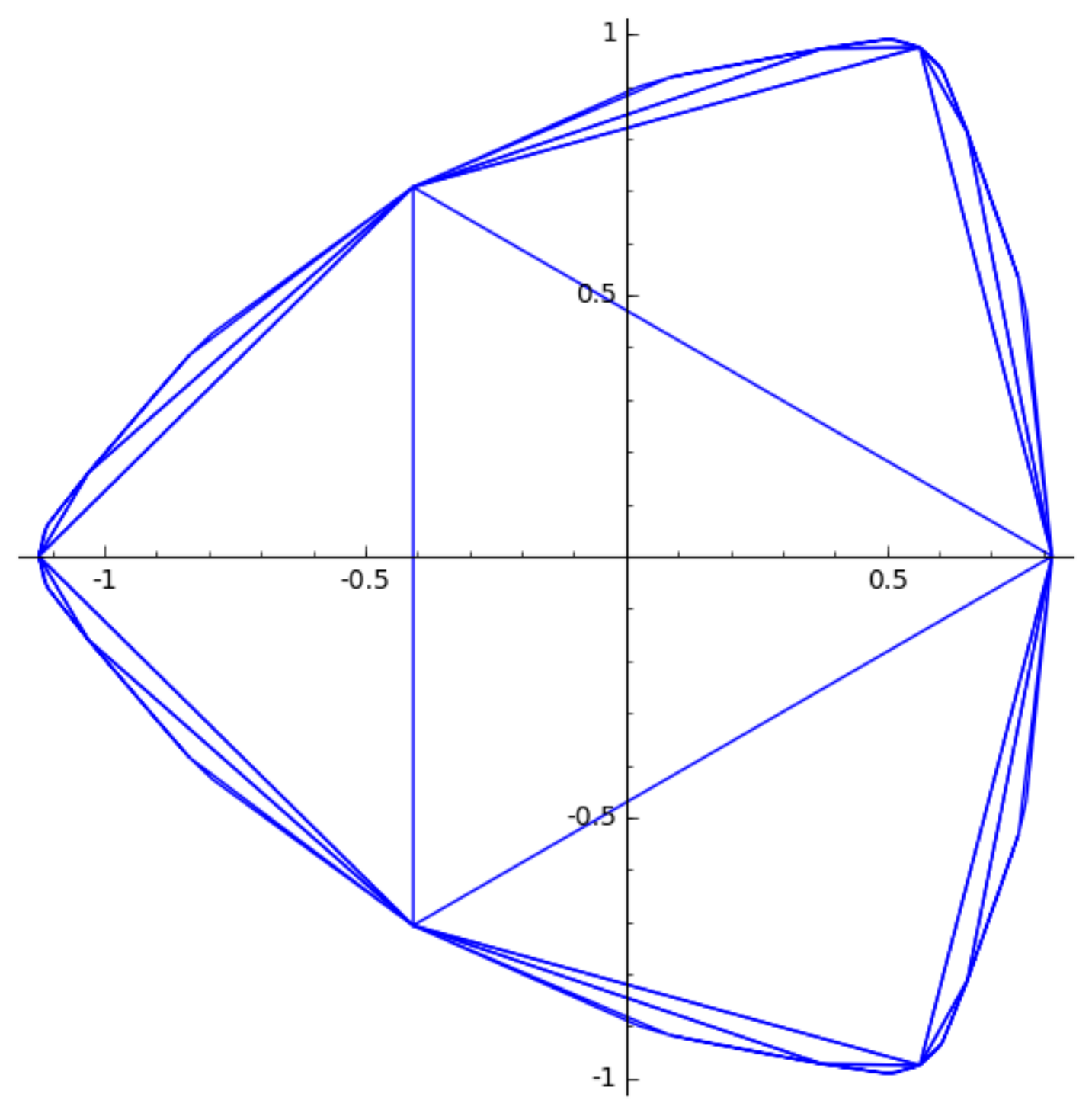} 
		\includegraphics[width=3.4cm]{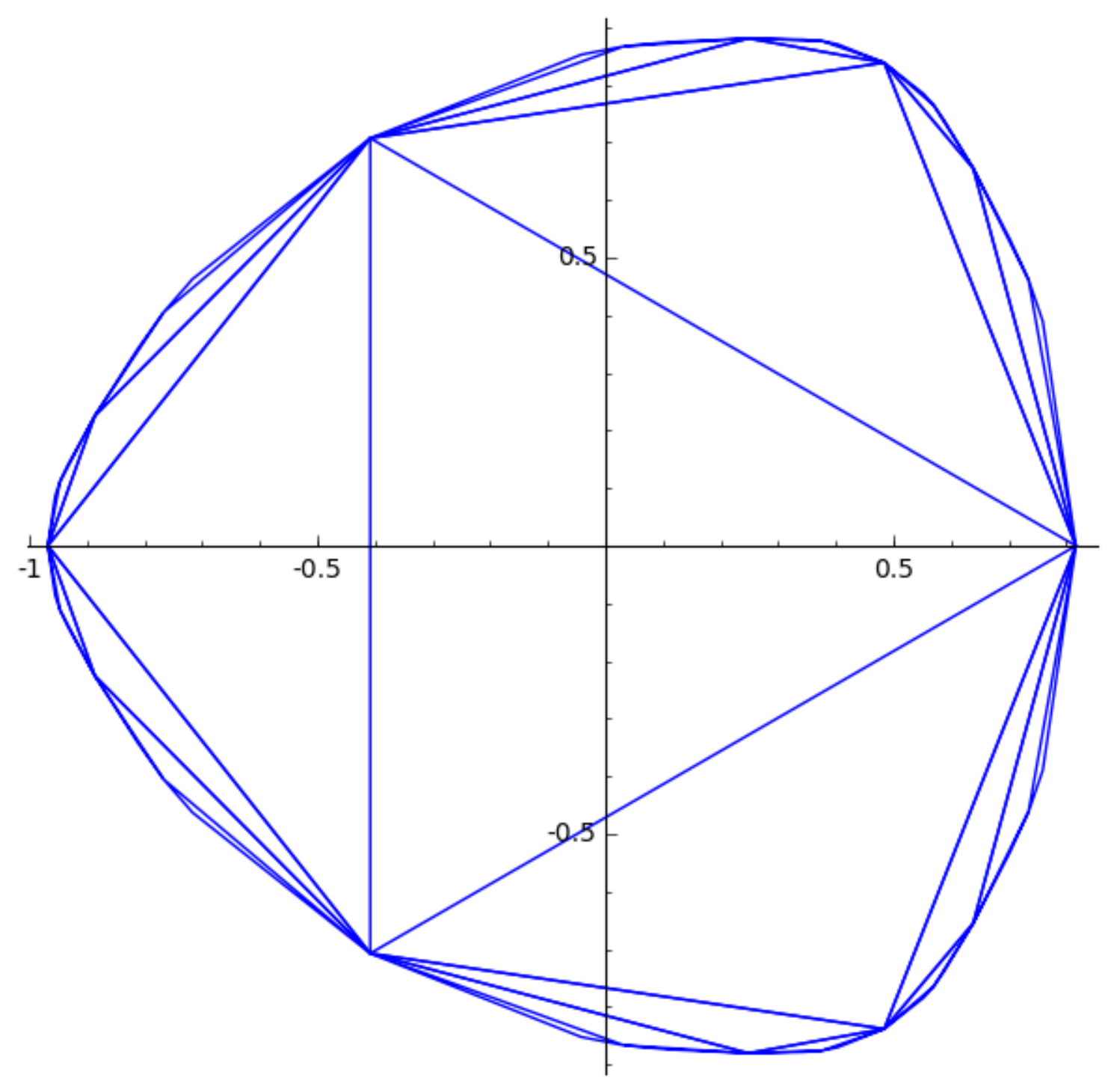} 
		\includegraphics[width=3.4cm]{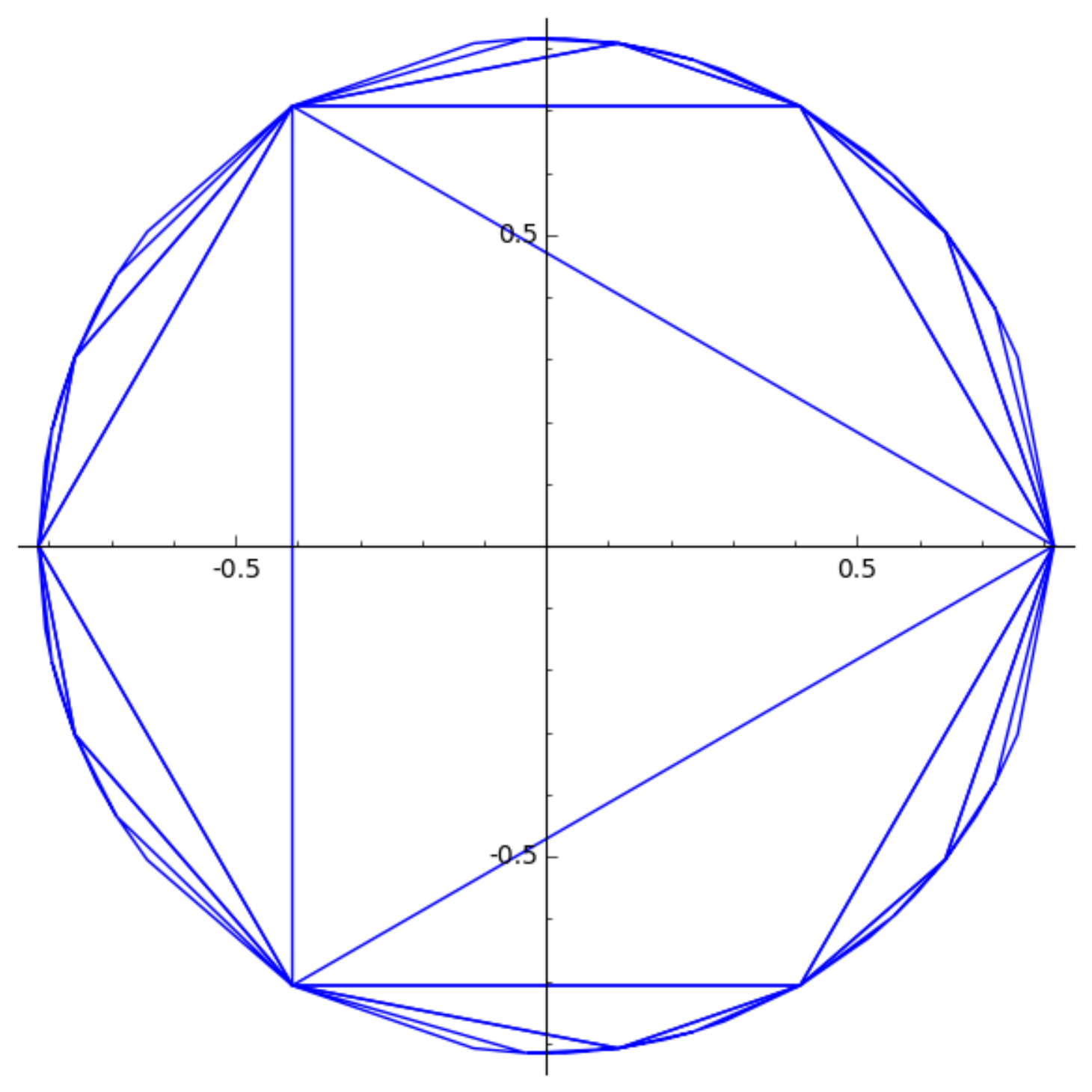}
 		\includegraphics[width=3.4cm]{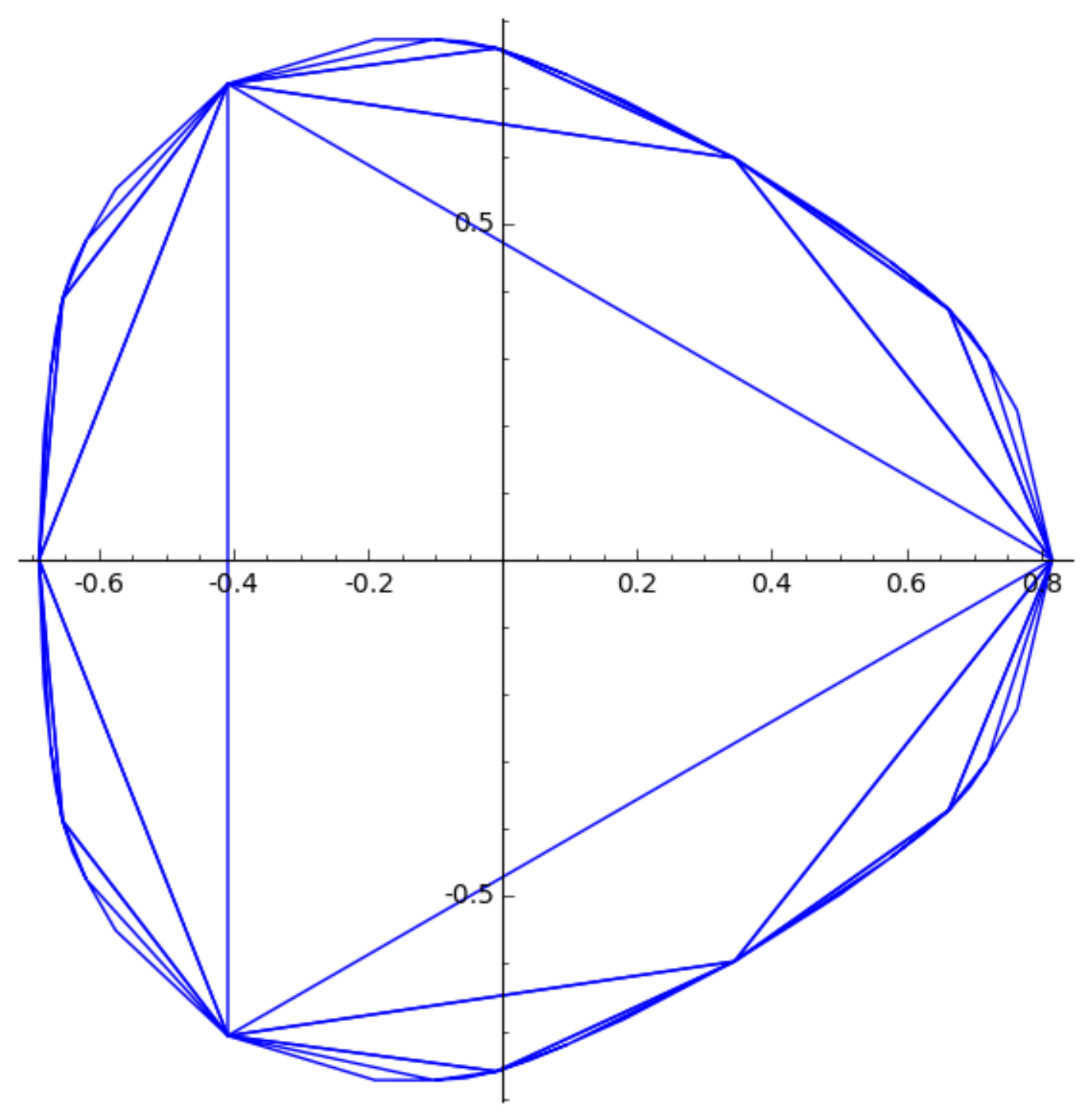} 
		\includegraphics[width=3.4cm]{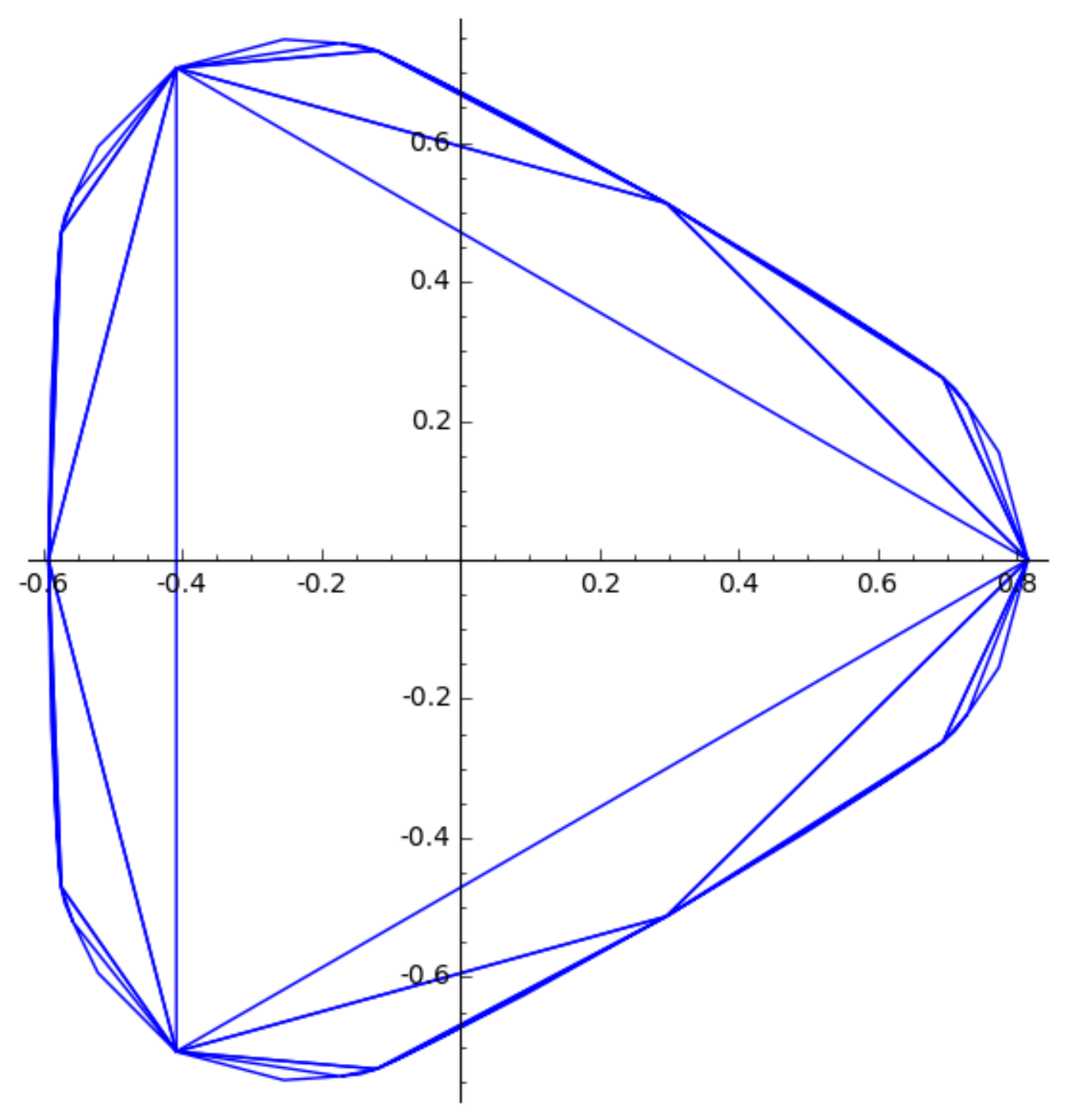} 
		\includegraphics[width=3.4cm]{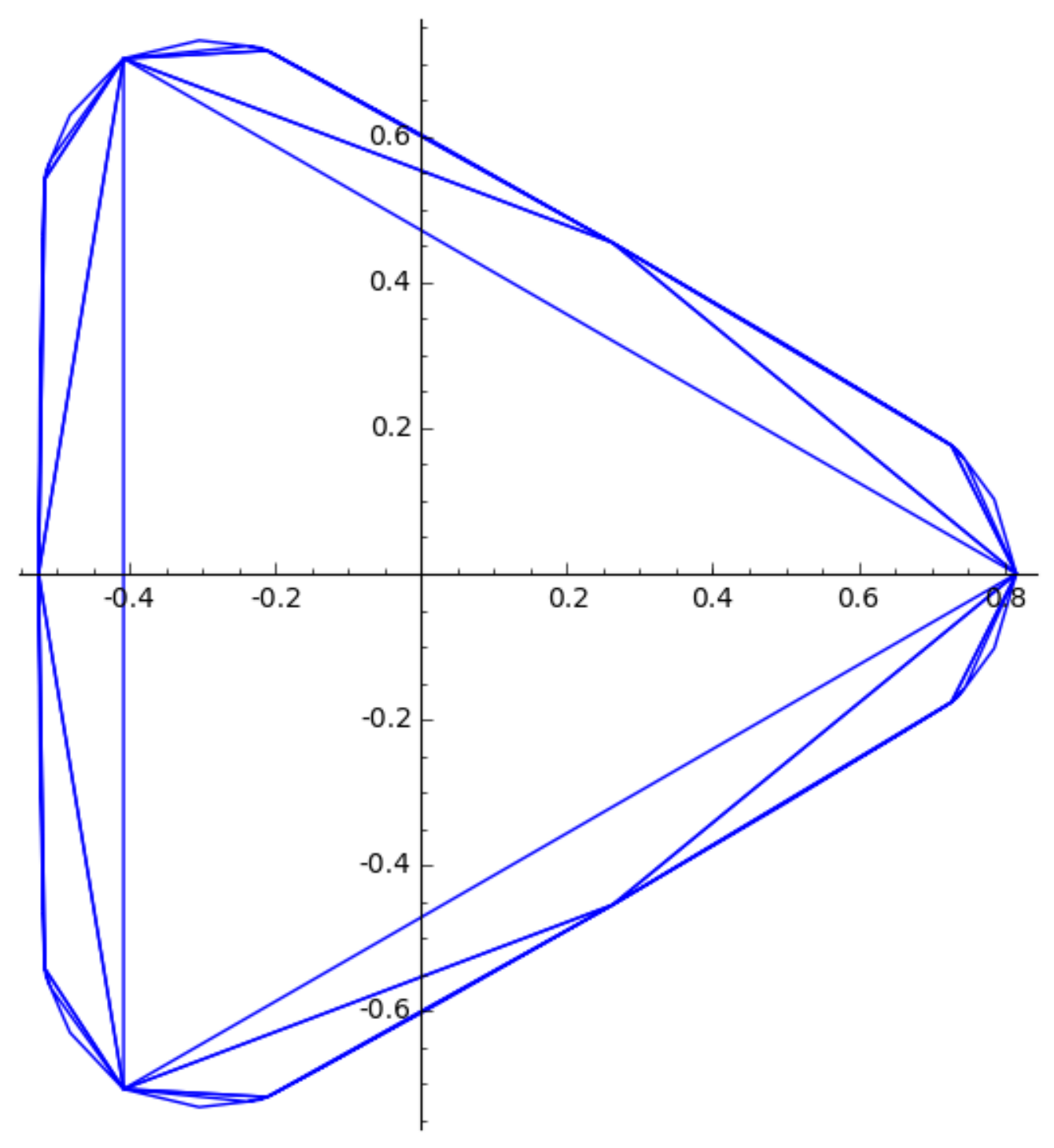} 
		\includegraphics[width=3.4cm]{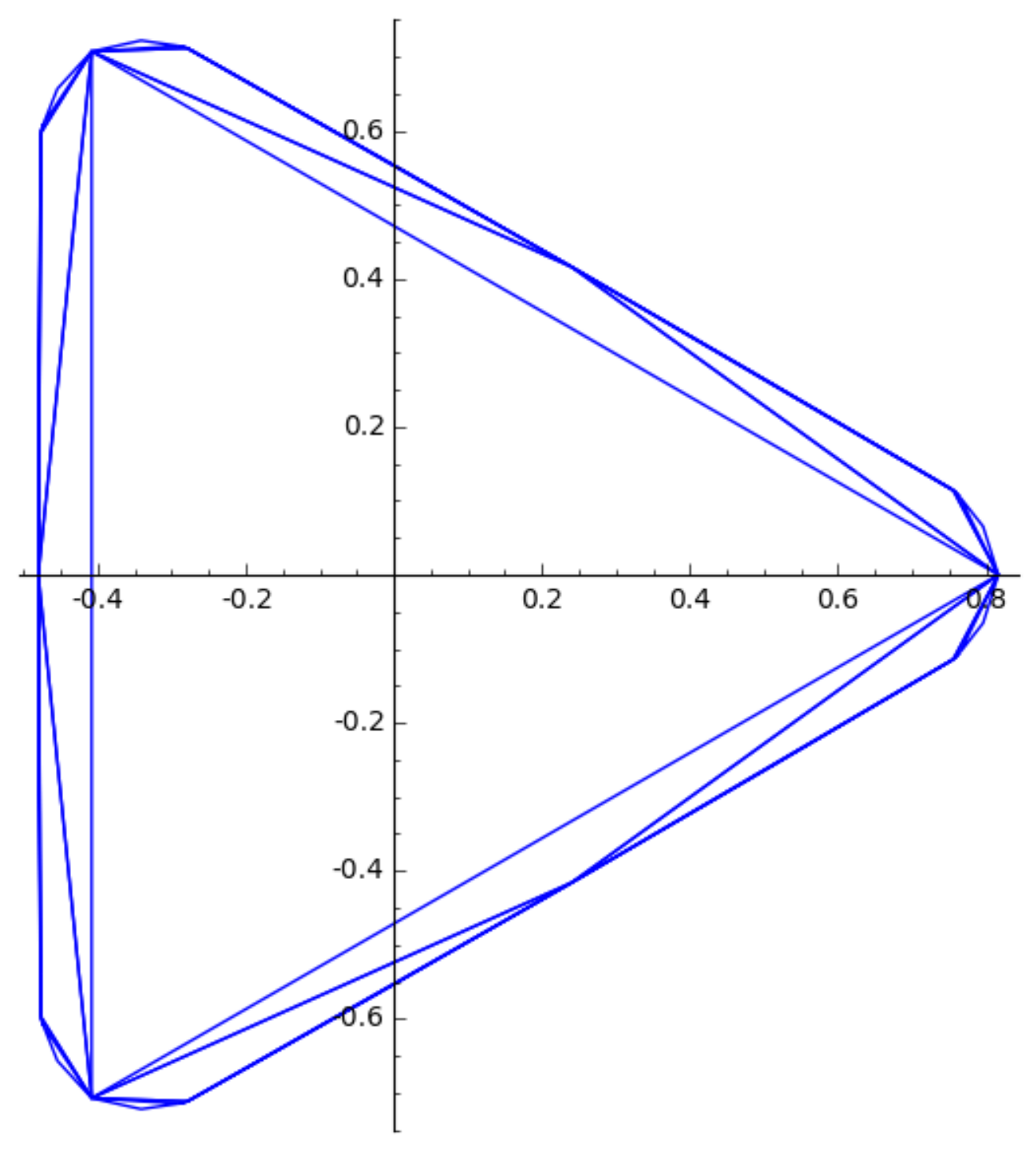}
		\caption{A degenerating sequence of projective structures on $S_{0,3}$ in clover position all lying in the same cell of moduli space and with the first and last pictures close to the boundary at infinity.}
	\label{fig:cloversequenceS03}
\end{figure}

This set-up, and the explicit determination of the cell decomposition for $S_{1,1}$, allow a systematic study of deformations and degenerations of strictly convex projective structures on the once-punctured torus, which will be conducted in the future. See Figure~\ref{fig:cloversequence} for an example. Moreover, there is scope to do explicit computations for other surfaces of low complexity---the main issue is that the rational functions appearing in the coordinates of orbits of the vertices of the fundamental domain become more complicated. Such examples are of particular interest because there are different possible approaches to compactifying the moduli space (just as there are different compactifications of Teichm\"uller space), whose relative merits can be explored with our tools. Moreover, there should be geometric invariants and properties associated with the relative position of a point in moduli space within the cell containing it. Again, it is hoped that our tools can be used to determine, and quantify, such invariants.


\section{Conclusion}
\label{sec:conclusion}

This paper gives evidence that results known about classical Teichm\"uller space may have analogues in projective geometry. In this paper, we have focussed on cusped strictly convex projective surfaces and highlighted possible applications in the previous section. More generally, the parameter space due to Fock and Goncharov also parameterises strictly convex projective structures with geodesic boundary. Cell decompositions for the analogous space of hyperbolic structures are known using a variety of  approaches; see, for instance, the work of Ushijima~\cite{Ushijima1999}, Penner~\cite{Penner2004}, Mondello~\cite{Mondello2009}, and Guo and Luo~\cite{GL2011}. Extending our methods to these more general surfaces would provide a unified framework, in which cusps can open to boundary components, and boundary components shrink to cusps.

An even more tantalising problem arises when going to higher dimensions. The constructions due to Epstein and Penner~\cite{MR918457} and Cooper and Long~\cite{CL} work in arbitrary dimensions. Moreover, there is a canonical cell decomposition of hyperbolic manifolds with boundary due to Kojima~\cite{koj1990, koj}. Whereas the moduli space of complete hyperbolic structures on a finite-volume hyperbolic manifold is a single point if the dimension is at least three, 
the moduli space of strictly convex projective structures on such a manifold may be larger as shown by Cooper, Long and Thistlethwaite~\cite{MR2264468, MR2372851, MR2529905}. At the time of writing, it is not clear why some hyperbolic 3--manifolds deform whilst others do not, and the study of the decomposition of the decorated moduli space may hold the key to the answer, as well as shed light on other connections between the geometry and topology of a manifold.


\subparagraph*{Acknowledgements}

This research was partially supported by Australian Research Council grant DP140100158.


\bibliography{cellrefs}


%

\address{School of Mathematics and Statistics F07\\ The University of Sydney\\ NSW 2006 Australia\\
robert.haraway@sydney.edu.au\\stephan.tillmann@sydney.edu.au}

\Addresses

\newpage

\appendix
\section{Code Listings}
\label{app:listings}
\begin{lstlisting}[caption={Maxima code, part 0},label=list:maxima0,captionpos=t,abovecaptionskip=-\medskipamount]
matrixEq(M,N) :=
  block([mr:length(M),mc:length(M[1]),
         nr:length(N),nc:length(N[1]),
         eqs:[],i,j],
        if mr = nr and mc = nc
        then for i:1 thru mr step 1 do
                 for j:1 thru mc step 1 do
                     eqs:cons(M[i][j]=N[i][j],eqs)
        else [],
        eqs)$
P : matrix([0,P01,P02],
           [P10,0,P12],
           [P20,P21,0])$
C : matrix([C0,0,0],[0,C1,0],[0,0,C2])$
D : matrix([D0,0,0],[0,D1,0],[0,0,D2])$
goal : matrix([0,f,1],[1,0,f],[f,1,0])$
pcdsols : solve(matrixEq(C.P.D,goal),[f,C0,C1,C2,D0,D1,D2])[3]$
print(pcdsols[1]);
\end{lstlisting}
(N.B. Maxima does not solve matrix
equations, necessitating the little
for loop routine.)

\begin{lstlisting}[caption={Maxima code, output 0},label=list:out0,captionpos=t,abovecaptionskip=-\medskipamount]
       1/3    1/3    1/3
    P01    P12    P20
f = --------------------
       1/3    1/3    1/3
    P02    P10    P21
\end{lstlisting}

\begin{lstlisting}[caption={Maxima code, part 1},label=list:maxima1,captionpos=t,abovecaptionskip=-\medskipamount]
symmsols : subst(P21^(1/3)*P12^(1/3)*P20^(-1/3)*P10^(-1/3),%r3,
                 pcdsols)$
symmsols[5];symmsols[6];symmsols[7];
\end{lstlisting}

\newpage

\begin{lstlisting}[caption={Maxima code, output 1},label=list:out1,captionpos=t,abovecaptionskip=-\medskipamount]
        1/3    1/3
     P12    P21
D0 = -------------
        1/3    1/3
     P10    P20

        1/3    1/3
     P02    P20
D1 = -------------
        1/3    1/3
     P01    P21

        1/3    1/3
     P01    P10
D2 = -------------
        1/3    1/3
     P02    P12
\end{lstlisting}

\begin{lstlisting}[caption={Maxima code, part 2},label=list:maxima2,captionpos=t,abovecaptionskip=-\medskipamount]
mapToStdTrigon(V,v) :=
  block([vV, L0, L1, L2, D, m],
    vV : v.V,
    L0 : (vV[2][3] * vV[3][2])^(1/3)/(vV[2][1] * vV[3][1])^(1/3),
    L1 : (vV[3][1] * vV[1][3])^(1/3)/(vV[3][2] * vV[1][2])^(1/3),
    L2 : (vV[1][2] * vV[2][1])^(1/3)/(vV[1][3] * vV[2][3])^(1/3),
    D : matrix([L0,0,0],
               [0,L1,0],
               [0,0,L2]),
    m : invert(V.D),
    determinant(m)^(-1/3)*m)$
projFromTo(V,v,W,w) := invert(mapToStdTrigon(W,w)) . 
                       mapToStdTrigon(V,v)$
\end{lstlisting}

We already know two flags of $y$: namely,
the first two standard basis vectors and
the first two associated covectors. This
is the content of listing \ref{list:maxima3}.

\begin{lstlisting}[caption={Maxima code, part 3},label=list:maxima3,captionpos=t,abovecaptionskip=-\medskipamount]
V0 : matrix([1],[0],[0])$
v0 : matrix([0,t012,1])$
V1 : matrix([0],[1],[0])$
v1 : matrix([1,0,t012])$
V2 : matrix([0],[0],[1])$
v2 : matrix([t012,1,0])$
\end{lstlisting}

The equations we solve are just our definitions
of the parameters as triple ratios, so we
had better code in triple ratios as well.
Listing \ref{list:maxima4} takes this
into consideration.

\begin{lstlisting}[caption={Maxima code, part 4},label=list:maxima4,captionpos=t,abovecaptionskip=-\medskipamount]
R3(v,V) :=
  block([vV:v.V],
    vV[1][2]*vV[2][3]*vV[3][1]/
   (vV[1][3]*vV[2][1]*vV[3][2]))$
trilateral(p,q,r) := addrow(addrow(p,q),r)$
triangle(P,Q,R) := addcol(addcol(P,Q),R)$

tr(P,p,Q,q,R,r) := R3(trilateral(p,q,r),triangle(P,Q,R))$
\end{lstlisting}

\begin{lstlisting}[caption={Maxima code, part 5},label=list:maxima5,captionpos=t,abovecaptionskip=-\medskipamount]
maprhs(listofeqs) := create_list(rhs(eq),eq,listofeqs)$
\end{lstlisting}

In a functional programming language like Haskell
we could write \texttt{(map rhs)} and skip listing \ref{list:maxima5}.

\begin{lstlisting}[caption={Maxima code, part 6},label=list:maxima6,captionpos=t,abovecaptionskip=-\medskipamount]
U2repEqs : block([UU,UUV0,UUV1,V2V0,V2V1,e01eq,e10eq,fullsol,Usol],
  UU : matrix([UU0],[UU1],[UU2]),
  UUV0 : matrix([UUV00,UUV01,UUV02]),
  UUV1 : matrix([UUV10,UUV11,UUV12]),
  V2V0 : matrix([0,1,0]),
  V2V1 : matrix([1,0,0]),
  e01eq : e01^3 = tr(V0,v0,UU,UUV1,V2,V2V1),
  e10eq : e10^3 = tr(V1,v1,V2,V2V0,UU,UUV0),
  subst((e10^3+1)*t012^2,%r8,solve([e01eq,e10eq,UUV0.UU=0,UUV0.V0=0,
                                      UUV1.UU=0,UUV1.V1=0],
                         [UU0,UU1,UU2,
                          UUV00,UUV01,UUV02,
                          UUV10,UUV11,UUV12])[2]));
\end{lstlisting}
N.B. we are allowed to substitute what we like for \texttt{\%r8}
because of our scalar freedom in choosing a representative of $U_2$, and we
choose Maxima's second solution since the other solutions are spurious.

The next listing incorporates the values of $U_0,$ $U_1$ and $U_2$.

\begin{lstlisting}[caption={Maxima code, part 7},label=list:maxima7,captionpos=t,abovecaptionskip=-\medskipamount]
U2rep : matrix([(e10^3 + 1)*t012^2],
               [(e01^3 + 1)*e10^3],
               [-e10^3*t012]);
sigma3 : matrix([0,0,1],
               [1,0,0],
               [0,1,0]);
U0rep : sigma3.sublis([e10=e21,e01=e12],U2rep);
U1rep : sigma3.sublis([e21=e02,e12=e20],U0rep);
\end{lstlisting}

We have now found the other vertices of our configurations $c,m,y$,
so we have all the triangle parts. Listing \ref{list:maxima8}
expresses this.

\begin{lstlisting}[caption={Maxima code, part 8},label=list:maxima8,captionpos=t,abovecaptionskip=-\medskipamount]
Y : triangle(U2rep,V1,V0)$
C : triangle(V1,U0rep,V2)$
M : triangle(V0,V2,U1rep)$
\end{lstlisting}

\begin{lstlisting}[caption={Maxima code, part 9},label=list:maxima9,captionpos=t,abovecaptionskip=-\medskipamount]
uu : matrix([uu0,uu1,uu2])$
yy : trilateral(uu,v1,v0)$
factor(subst(t012^2*t210^3,%r35,
             solve([R3(yy,Y)=t210^3,uu.U2rep=0],
                   [uu0,uu1,uu2]))[1]);
\end{lstlisting}

The next listing incorporates the computation of the covectors.

\begin{lstlisting}[caption={Maxima code, part 10},label=list:maxima10,captionpos=t,abovecaptionskip=-\medskipamount]
u2rep : matrix([e01^3*e10^3, t012^2*t210^3, 
                t012*(e01^3*t210^3 + t210^3 + e01^3*e10^3 + e01^3)]);
u0rep : sublis([e01=e12,e10=e21],u2rep).invert(sigma3);
u1rep : sublis([e12=e20,e21=e02],u0rep).invert(sigma3);
\end{lstlisting}

\begin{lstlisting}[caption={Maxima code, part 11},label=list:maxima11,captionpos=t,abovecaptionskip=-\medskipamount]
y : trilateral(u2rep,v1,v0)$
c : trilateral(v1,u0rep,v2)$
m : trilateral(v0,v2,u1rep)$

r : factor(projFromTo(M,m,Y,y))$
g : factor(projFromTo(Y,y,C,c))$
b : factor(projFromTo(C,c,M,m))$
\end{lstlisting}

\begin{lstlisting}[caption={Maxima code, part 12},label=list:maxima12,captionpos=t,abovecaptionskip=-\medskipamount]
perph : r.g.b$
factor(charpoly(perph,lambda));
\end{lstlisting}

Listing \ref{list:maxima13} 
phrases the contents of Lemma~\ref{lem:parhol} in code, assuming all variables are positive.

\begin{lstlisting}[caption={Maxima code, part 13},label=list:maxima13,captionpos=t,abovecaptionskip=-\medskipamount]
parhol : [t210 = 1/t012, e02 = 1/(e01*e10*e12*e21*e20)]$
\end{lstlisting}

The next listing summarises the choice of light-cone representatives in \S\ref{sec:canonicity}.

\begin{lstlisting}[caption={Maxima code, part 14},label=list:maxima14,captionpos=t,abovecaptionskip=-\medskipamount]
S0 : matrix([e20*e02*e21],[0],[0])$
S1 : matrix([0],[e01*e10*e02],[0])$
S2 : matrix([0],[0],[e12*e21*e10])$
omega : matrix([e01*e10*e12, e12*e21*e20, e20*e02*e01])$
\end{lstlisting}

The next listing encodes the bendings, using the semantics of subtractive primary colours.

\begin{lstlisting}[caption={Maxima code, part 15},label=list:maxima15,captionpos=t,abovecaptionskip=-\medskipamount]
YB : omega.r.S0$
YB : factor(YB);
CB : omega.g.S1$
CB : factor(CB);
MB : omega.b.S2$
MB : factor(MB);
\end{lstlisting}

\begin{lstlisting}[caption={Maxima code, part 16},label=list:maxima16,captionpos=t,abovecaptionskip=-\medskipamount]
factor(solve(sublis(parhol,CB)=1, [e20]));
\end{lstlisting}

\begin{lstlisting}[caption={Maxima code, part 17},label=list:maxima17,captionpos=t,abovecaptionskip=-\medskipamount]
factor(solve(sublis(parhol,YB)=1,[e20]));
\end{lstlisting}

Algorithms for cylindrical algebraic
decomposition can return a list
containing a point from every cell 
of this decomposition. We run
such an algorithm in {\tt Sage}\rm~\cite{sage} on the
intersection of the cyan and yellow
flat-set projections as in listing \ref{list:SAGE},
returning as output listing \ref{list:SAGEout}.
The first line is just from the declaration of variables.
The last line is the output of \texttt{qepcad}; it is
a list of a point from every cell in the intersection
of the projections to the $t_{0 1 2},e_{0 1},e_{1 0},e_{1 2},e_{2 1}$-plane
of the cyan and yellow flat-sets. But this list is empty; therefore, their intersection is empty.

\begin{lstlisting}[caption={SAGE code},label=list:SAGE,captionpos=t,abovecaptionskip=-\medskipamount]
# SAGE code for disjointness
qf = qepcad_formula
var('t012,e01,e10,e12,e21')

top_c = e01*e10^2*e12^2*e21*t012 - e12^3 - 1
bot_c = e21^3*t012 + t012 - e01^2*e10*e12*e21^2
cyan_pos = qf.and_(top_c > 0, bot_c > 0)
cyan_neg = qf.and_(top_c < 0, bot_c < 0)
cyan_0   = qf.and_(top_c== 0, bot_c== 0)

top_y = e01*e10^3*e12^2*e21*t012 + e01*e12^2*e21*t012 - e10
bot_y = e01*t012 - e01^3*e10*e12*e21^2 - e10*e12*e21^2
yell_pos = qf.and_(top_y > 0, bot_y > 0)
yell_neg = qf.and_(top_y < 0, bot_y < 0)
yell_0   = qf.and_(top_y== 0, bot_y== 0)

all_pos = qf.and_(t012>0, e01>0, e10>0, e12>0, e21>0)

cyan   = qf.and_(all_pos,qf.or_(cyan_pos,cyan_0,cyan_neg))
yellow = qf.and_(all_pos,qf.or_(yell_pos,yell_0,yell_neg))

qepcad(qf.and_(cyan,yellow),solution='cell-points')
\end{lstlisting}

\begin{lstlisting}[caption={SAGE output},label=list:SAGEout,captionpos=t,abovecaptionskip=-\medskipamount]
(t012, e01, e10, e12, e21)
[]
\end{lstlisting}


\end{document}